\newtheorem{thm}{Theorem}[section]
\theoremstyle{definition}
\theoremstyle{remark}
\numberwithin{equation}{section}
\title{\bf Enhanced Approximate Cloaking by SH and FSH Lining}
\author{Jingzhi Li\thanks{SAM, D-MATH, ETH Z\"urich, CH--8902, Switzerland ({\tt
jingzhi.li@sam.math.ethz.ch})}\qquad Hongyu Liu\thanks{Department of
Mathematics and Statistics, University of North Carolina, Charlotte,
NC 28263, USA. ({\tt hliu28@uncc.edu})} \qquad Hongpeng
Sun\thanks{Institute of Mathematics, Academy of Mathematics and
Systems Science, Chinese Academy of Sciences, Beijing 100190, P. R.
China. ({\tt hpsun@amss.ac.cn})} }
\begin{document}

\date{}

\maketitle

\begin{abstract}
We consider approximate cloaking from a regularization viewpoint introduced in \cite{KSVW} for EIT and further investigated in
\cite{KOVW,Liu} for the Helmholtz equation. The cloaking schemes in \cite{KOVW} and \cite{Liu} are shown to be (optimally) within
$|\ln\rho|^{-1}$ in 2D and $\rho$ in 3D of perfect cloaking, where $\rho$ denotes the regularization
parameter. In this paper, we show that by employing a sound-hard layer
right outside the cloaked region, one could (optimally) achieve
$\rho^N$ in $\mathbb{R}^N,\ N\geq 2$, which significantly enhances the near-cloak. We then develop a cloaking scheme
by making use of a lossy layer with well-chosen parameters. The lossy-layer cloaking scheme is shown to possess the same cloaking
performance as the one with a sound-hard layer. Moreover, it is shown that the lossy layer could be taken as a finite realization of the sound-hard layer. Numerical experiments are also presented to assess the cloaking performances of all the cloaking schemes for comparisons.
\end{abstract}

%---------------------------------------------------------------------

\section{Introduction}\label{sec:intro}

A region is said to be \emph{cloaked} if its contents together with
the cloak are invisible to certain measurements. From a practical
viewpoint, these measurements are made in the exterior of the cloak.
Blueprints for making objects invisible to electromagnetic waves
were proposed by Pendry {\it et al.} \cite{PenSchSmi} and Leonhardt
\cite{Leo} in 2006. In the case of electrostatics, the same idea was
discussed by Greenleaf {\it et al.} \cite{GLU2} in 2003. The key
ingredient is that optical parameters have transformation properties
and could be {\it pushed-forward} to form new material parameters.
The obtained materials/media are called {\it transformation media}.
We refer to \cite{CC,GKLU4,GKLU5,Nor,U2,YYQ} for state-of-the-art
surveys on the rapidly growing literature and many striking
applications of the so-called `transformation optics'.

In this work, we shall be mainly concerned with the acoustic
cloaking via `transformation acoustics'. The transformation media
proposed in \cite{GLU2,PenSchSmi} are rather singular. This poses
much challenge to both theoretical analysis and practical
fabrication. In order to avoid the singular structures, several
regularized approximate cloaking schemes are proposed in
\cite{GKLU_2,KOVW,KSVW,Liu,RYNQ}. The basic idea is to introduce
regularization into the singular transformation underlying the ideal
cloaking, and instead of the perfect invisibility, one would
consider the `near-invisibility' depending on a regularization
parameter. Our study is closely related to the one introduced in
\cite{KSVW} for approximate cloaking in EIT, where the
`blow-up-a-point' transformation in \cite{GLU2,PenSchSmi} is
regularized to be the `blow-up-a-small-region' transformation. The
idea was further explored in \cite{KOVW} and \cite{Liu} for the
Helmholtz equation. In \cite{Liu}, the author imposed a homogeneous
Dirichlet boundary condition at the inner edge of the cloak and
showed that the `blow-up-a-small-region' construction gives
successful near-cloak. In \cite{KOVW}, the authors introduced a
special lossy-layer between the cloaked region and the cloaking
region, and also showed that the `blow-up-a-small-region'
construction gives successful near-cloak. For both cloaking
constructions, it was shown that the near-cloaks come, respectively,
within $1/|\ln \rho|$ in 2D and $\rho$ in 3D of the perfect
cloaking, where $\rho$ is the relative size of the small region
being blown-up for the construction and plays the role of a
regularization parameter. These estimates are also shown to be
optimal for their constructions.

It is worth noting that if one lets the loss parameter in
\cite{KOVW} go to infinity, this limit corresponds to the imposition
of a homogeneous Dirichlet boundary condition at the inner edge of
the cloak. On the other hand, the imposition of a homogeneous
Dirichlet boundary condition at the inner edge of the cloak is
equivalent to employing a sound-soft layer right outside the cloaked
region. In this sense, the lossy layer lining in \cite{KOVW} is a
finite realization of the sound-soft lining in \cite{Liu}. We would
like to emphasize that employing some special lining is necessary
for the near-cloak construction, since otherwise it is shown in
\cite{KOVW} that there exists certain resonant inclusion which
defies any attempt to achieve near-cloak.

In this work, we shall impose a homogeneous Neumann boundary
condition on the inner edge of the cloak, which amounts to employing
a sound-hard layer right outside the cloaked region. The cloaking
scheme is referred to as an {\it SH construction}. For the SH
construction, we show that one could achieve significantly enhanced
cloaking performance. Actually, it is shown that one could achieve,
respectively, $\rho^2$ in 2D and $\rho^3$ in 3D within the perfect
cloaking for such construction. We then develop a cloaking scheme by
making use of a well-chosen lossy layer lining. The properly
designed lossy-layer could be taken as a finite realization of the
sound-hard lining. The cloaking scheme is referred to as an {\it FSH
construction}. The FSH construction is shown to possess the same
cloaking performance as the SH construction.

The analysis of cloaking must specify the type of exterior
measurements. In \cite{GKLU_2,KOVW,KSVW}, the near-cloaks are
assessed in terms of boundary measurement encoded into the boundary
Dirichlet-to-Neumann (DtN) map. The scattering measurement is
considered for the near-cloak in \cite{Liu}. In the present work, we
shall assess our near-cloak construction with respect to scattering
measurement encoded into the scattering amplitude. Nonetheless, by
\cite{Nach,NSU}, it can be shown, at least heuristically, that
knowing the scattering amplitude amounts to knowing the boundary DtN
map.

In this paper, we focus entirely on transformation-optics-approach
in constructing cloaking devices. But we would
like to mention in passing the other promising cloaking schemes
including the one based on anomalous localized resonance \cite{MN},
and another one based on special (object-dependent) coatings
\cite{AE}. It is also interesting to note a recent work in \cite{AKLL}, where the authors
implement multi-coatings to enhance the near-cloak in EIT.

The rest of the paper is organized as follows. In Section 2, we
develop the cloaking scheme by employing the sound-hard lining. In
Section 3, we present the cloaking scheme with a properly designed
lossy layer. Section 4 is devoted to discussions on different cloaking
schemes.  In Section 5, we present the numerical examples.

\section{Transformation acoustics and cloaking construction}

Let $q\in L^\infty(\mathbb{R}^N)$ be a scalar function and $\sigma=(\sigma^{ij})_{i,j=1}^N\in\mbox{Sym}(N)$ be a
symmetric-matrix-valued function on $\mathbb{R}^N$, which is bounded in
the sense that, for some constants $0<c_0<C_0<\infty$,
\begin{equation}
\label{eqn:Bound_Sigma} c_0 \xi^T \xi \leq \xi^T \sigma(x) \xi \leq
C_0 \xi^T \xi
\end{equation}
for all $x\in \mathbb{R}^N$ and $\xi \in \mathbb{R}^N$. In acoustics, $\sigma^{-1}$ and $q$, respectively, represent the mass density tensor and the bulk modulus of a {\it
regular} acoustic medium. We shall denote
$\{\mathbb{R}^N; \sigma, q\}$ an acoustic medium as described above.
It is assumed that the inhomogeneity of the acoustic medium is
compactly supported, namely, $\sigma=I$ and $q=1$ in
$\mathbb{R}^N\backslash\bar{\Omega}$ with $\Omega$ a bounded Lipschitz domain
in $\mathbb{R}^N$. In $\mathbb{R}^N$, the time-harmonic acoustic
wave propagation is governed by the heterogeneous Helmholtz equation
\begin{equation}\label{eq:Helmholtz II}
\mbox{div}(\sigma\nabla u)+k^2 q u=0,
\end{equation}
where $k>0$ represents the wave number. Stationary scattering theory is to seek a solution to (\ref{eq:Helmholtz II}) admitting the following
asymptotic development
\begin{equation}\label{eq:scattering amplitude}
u(x)=e^{ix\cdot\xi}+\frac{e^{ik|x|}}{|x|^{(N-1)/2}}\left\{A\left(\frac{x}{|x|}, \frac{\xi}{|\xi|}\right)+\mathcal{O}\left(\frac{1}{|x|}\right)\right\},\ \ |x|\rightarrow\infty,
\end{equation}
where $\xi=k d$ with $d\in\mathbb{S}^{N-1}$. $A(\hat{x},d)$ with $\hat{x}:=x/|x|$ is the so-called scattering amplitude. An important problem arising in practical application is to recover $\{\Omega; \sigma, q\}$ from the measurement of the corresponding scattering amplitude. In the following, for clarity and also the convenience of our subsequent study, we give a more detailed description of the scattering problem. We shall let $u^{i}(x):=e^{ikx\cdot d}$ denote a time-harmonic plane wave, where $d\in\mathbb{S}^{N-1}$ denotes the incident direction. Let $u^{int}$ and $u^{ext}$ denote the total wave fields inside and outside the inhomogeneous medium, respectively, which satisfy the following PDE system
\begin{equation}\label{eq:Hel system}
\begin{cases}
& \nabla\cdot(\sigma\nabla u^{int})+k^2 q u^{int}=0\quad\mbox{in\ \ $\Omega$},\\
& \Delta u^{ext}+k^2 u^{ext}=0\quad\mbox{in\ \ $\mathbb{R}^N\backslash\bar{\Omega}$},\\
& \displaystyle{u^{int}|_{\partial \Omega}=u^{ext}|_{\partial\Omega},\quad \sum_{i,j=1}^n \nu_i\sigma^{ij}\partial_j u^{int}\bigg|_{\partial\Omega}=\frac{\partial u^{ext}}{\partial\nu}\bigg|_{\partial\Omega},}\\
& u^{ext}(x)=u^i(x)+u^s(x),\ \ x\in\mathbb{R}^N\backslash\bar{\Omega},\\
& \displaystyle{\lim_{r\rightarrow\infty}r^{(N-1)/2}\left\{\frac{\partial u^s}{\partial r}-ik u^s\right\}=0,}
\end{cases}
\end{equation}
where $r=|x|$ for $x\in\mathbb{R}^N$. We know $u^s\in H_{loc}^1(\mathbb{R}^N)$ (see, e.g. \cite{Isa,McL}) and clearly, $A(\hat{x},d)$ can be read off from the large $|x|$ asymptotics of $u^s$.

In this paper, we shall be concerned with the construction of a layer of cloaking medium which makes the inside scatterer invisible to scattering amplitude. To that end, we present a quick discussion on transformation acoustics. Let $\tilde
x=F(x):\Omega\rightarrow\widetilde\Omega$ be a bi-Lipschitz and
orientation-preserving mapping. For an acoustic medium
$\{\Omega;\sigma,q\}$, we let the {\it push-forwarded} medium be
defined as
\begin{equation}\label{eq:pushforward}
\{\widetilde\Omega;\widetilde\sigma,\widetilde
q\}=F_*\{\Omega;\sigma,q\}:=\{\Omega; F_*\sigma, F_*q\},
\end{equation}
where
\begin{equation}
\begin{split}
&\widetilde{\sigma}(\tilde
x)=F_*\sigma(x):=\frac{1}{J}M\sigma(x)M^T|_{x=F^{-1}(\tilde x)}\\
&\widetilde{q}(\tilde x)=F_*q(x):=q(x)/J|_{x=F^{-1}(\tilde x)}
\end{split}
\end{equation}
and $M=(\partial \tilde{x}_i/\partial x_j)_{i,j=1}^n$,
$J=\mbox{det}(M)$. Then $u\in H^1(\Omega)$ solves the Helmholtz equation
\[
\nabla\cdot(\sigma\nabla u)+k^2q u=0\quad\mbox{on\ $\Omega$},
\]
if and only if the pull-back field $\widetilde u=(F^{-1})^*u:=u\circ F^{-1}\in H^1(\widetilde{\Omega})$ solves
\[
\widetilde{\nabla}\cdot(\widetilde\sigma\widetilde\nabla \widetilde
u)+k^2\widetilde q\widetilde u=0.
\]
We have made use of $\nabla$ and $\widetilde\nabla$ to distinguish the
differentiations respectively in $x$- and $\tilde x$-coordinates. We
refer to \cite{KOVW, Liu} for a proof of this invariance.

We are in a position to construct the cloaking device. In the sequel, let $D\Subset\Omega$ be a Lipschitz domain such that $\Omega\backslash\bar{D}$
is connected. W.L.O.G., we assume that $D$ contains the origin. Let $\rho>0$ be sufficiently small and $D_\rho:=\{\rho x; x\in D\}$. Suppose
\begin{equation}\label{eq:trans}
F_\rho: \bar{\Omega}\backslash D_\rho\rightarrow \bar{\Omega}\backslash D,
\end{equation}
which is a bi-Lipschitz and orientation-preserving mapping, and $F_\rho|_{\partial \Omega}=\mbox{Identity}$. A celebrated example of such blow-up mapping is given by
\begin{equation}\label{eq:F:ball:map}
y=F_\rho(x):=\left(\frac{R_1-\rho}{R_2-\rho}R_2+\frac{R_2-R_1}{R_2-\rho}|x|\right)\frac{x}{|x|},\
\ \rho<R_1<R_2
\end{equation}
which blows-up the central ball $B_\rho$ to $B_{R_1}$ within $B_{R_2}$. Now, we set
\begin{equation}\label{eq:cloaking medium}
\{\Omega\backslash\bar{D};\sigma_c^\rho,q_c^\rho\}=(F_\rho)_*\{\Omega\backslash\bar{D}_{\rho};
I, 1\}.
\end{equation}
Let $M\Subset D$ represent the cloaked region. Then we claim the
following construction gives a near-cloaking device
\begin{equation}\label{eq:partial cloaking device}
\{\mathbb{R}^N;\sigma,q\}=\begin{cases}
 \ \ \{I, 1\}\hspace*{3.6cm}& \mbox{in\ \ $\mathbb{R}^N\backslash\bar{\Omega}$};\\
\ \ \{\sigma^\rho_c, q^\rho_c\} & \mbox{in\ \
$\Omega\backslash\bar{D}$};\\
\ \ \mbox{a sound-hard layer} & \mbox{in\ \
$D\backslash M$};\\
\ \ \mbox{arbitrary target object} & \mbox{in\ \ $M$}.
\end{cases}
\end{equation}
In (\ref{eq:partial cloaking device}), by a sound-hard layer we mean
a layer of material which prevents acoustic wave from penetrating
inside and the normal velocity of the underlying wave field vanishes
on the exterior boundary of the layer. The wave
equation governing the wave scattering corresponding to the cloaking
device constructed in (\ref{eq:partial cloaking device}) is
\begin{equation}\label{eq:physical wave}
\begin{cases}
&\nabla\cdot(\sigma\nabla u)+k^2 q u=0\quad \mbox{in\ \ $\mathbb{R}^N\backslash\bar{D}$},\\
&\displaystyle{\sum_{i,j=1}^N(\sigma_c^\rho)^{ij}\nu_i\partial_j
u=0\quad \mbox{on\ \ $\partial D$}},
\end{cases}
\end{equation}
where $\nu=(\nu_i)_{i=1}^N$ is the exterior unit normal vector to
$\partial D$. In the following, we shall let $\mathcal{A}(\hat{x},d)$ denote the scattering amplitude to the PDE system
(\ref{eq:physical wave}) corresponding to the cloaking device. Let
\[
F=F_\rho\quad\mbox{on\ \ $\Omega\backslash\bar{D}_{\rho}$};\ \ \mbox{Identity}\ \ \mbox{on\ \ $\mathbb{R}^N\backslash\Omega$}.
\]
Set $v=F^*u\in H_{loc}^1(\mathbb{R}^N\backslash\bar{D}_\rho)$ and $v^s(x):=v(x)-e^{ikx\cdot d}$. By transformation acoustics, together with
straightforward calculations, one can show that
\begin{equation}\label{eq:virtual}
\begin{cases}
& (\Delta+k^2)v=0\quad\mbox{in\ \ $\mathbb{R}^N\backslash\bar{D}_\rho$},\\
& \displaystyle{\frac{\partial v}{\partial \nu}\bigg|_{\partial D_\rho}=0},\\
& v(x)=v^s(x)+e^{ix\cdot\xi}\quad x\in\mathbb{R}^N\backslash\bar{D}_\rho,\\
& \displaystyle{\lim_{r\rightarrow\infty} r^{(N-1)/2}\left\{\frac{\partial v^s}{\partial r}-ik v^s\right\}=0.}
\end{cases}
\end{equation}
In terms of the terminologies in \cite{Liu}, (\ref{eq:physical wave}) describes the scattering in the physical space and (\ref{eq:virtual}) describes the scattering in the virtual space. Since $v=u$ in $\mathbb{R}^N\backslash\bar{\Omega}$, we see the scattering in the physical space is the same as that in the virtual space. That is, $\mathcal{A}(\hat{x},d)$ could also be read off from the large $|x|$ asymptotics of $v^s$. Next, we give one of the main results of this paper, which justifies the near-invisibility of the above construction.

\begin{thm}\label{thm:main}
There exists $\rho_0>0$ such that when $\rho<\rho_0$
the scattering amplitude $\mathcal{A}(\hat{x},d)$ to (\ref{eq:virtual}) satisfies
\begin{equation}\label{eq:full cloaking}
|\mathcal{A}(\hat{x},d)|\leq C\rho^N,\ \ \ \hat{x},d\in\mathbb{S}^{N-1},
\end{equation}
where $C$ is a constant dependent only on $k, \rho_0$ and $D$, but completely independent of $\rho$.
\end{thm}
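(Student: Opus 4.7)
The plan is to work entirely in the virtual space governed by (\ref{eq:virtual}), so that the cloaking estimate becomes a claim about a classical exterior Helmholtz scattering problem by a small sound-hard obstacle $D_\rho$ of diameter $O(\rho)$. My strategy is to represent $\mathcal{A}(\hat x,d)$ as a boundary integral over $\partial D_\rho$, and then exploit both the divergence-theorem cancellation provided by the Neumann condition and a scaling argument for the exterior Neumann problem.

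Starting from Green's representation formula applied to $v^s=v-u^i$ in $\mathbb{R}^N\setminus\bar D_\rho$ and taking the far-field asymptotics of the fundamental solution, I would obtain
$$\mathcal{A}(\hat x,d)=-c_N\!\int_{\partial D_\rho}\!\bigl[e^{-ik\hat x\cdot y}\,\partial_\nu u^i(y)+v^s(y)\,\partial_\nu e^{-ik\hat x\cdot y}\bigr]\,ds(y),$$
having used $\partial_\nu v^s=-\partial_\nu u^i$ on $\partial D_\rho$ (this is precisely the sound-hard condition). Since $u^i(y)=e^{iky\cdot d}$ and $e^{-ik\hat x\cdot y}$ both solve the homogeneous Helmholtz equation, Green's second identity rewrites the first integrand as a volume integral over $D_\rho$, which is bounded by a constant times $k^2|D_\rho|=O(\rho^N)$ uniformly in $\hat x,d\in\mathbb{S}^{N-1}$. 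This is where the Neumann condition does real work: in the sound-soft analog the unknown Neumann trace of $v^s$ contributes a term whose size is dictated by the capacity of $D_\rho$, precisely the mechanism behind the weaker $1/|\ln\rho|$ and $\rho$ rates of \cite{KOVW,Liu}.

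For the remaining term $\int_{\partial D_\rho}v^s\,\partial_\nu e^{-ik\hat x\cdot y}\,ds$ I would use the rescaling $\tilde v^s(y):=v^s(\rho y)$, which satisfies the exterior Helmholtz equation in $\mathbb{R}^N\setminus\bar D$ with wavenumber $k\rho$, Sommerfeld radiation, and Neumann datum of size $O(\rho)$ on the fixed boundary $\partial D$. Well-posedness of the exterior Neumann problem, via Fredholm analysis of the combined layer-potential boundary integral equation, yields a stability estimate $\|\tilde v^s\|_{L^2(\partial D)}\le C\,\|\partial_\nu\tilde v^s\|_{H^{-1/2}(\partial D)}=O(\rho)$ with $C$ independent of $\rho$. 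Scaling back gives $\|v^s\|_{L^1(\partial D_\rho)}\le|\partial D_\rho|^{1/2}\|v^s\|_{L^2(\partial D_\rho)}=O(\rho^N)$, so the second integral is also $O(\rho^N)$ and combining with the first yields $|\mathcal{A}(\hat x,d)|\le C\rho^N$.

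The main technical obstacle is guaranteeing that the stability constant above stays uniformly bounded as the effective wavenumber $k\rho$ tends to zero. This is routine for $N\ge 3$, but in $N=2$ the Laplace exterior Neumann operator has a constant-mode nullspace and its fundamental solution is logarithmic, so the constant could in principle deteriorate. To close this gap I would observe that $\int_{\partial D_\rho}\partial_\nu v^s\,ds=-\int_{D_\rho}\Delta u^i\,dy=O(\rho^N)$, so the Neumann datum has a harmlessly small projection onto the dangerous mode, and then expand the boundary integral operators in powers of $k\rho$ along the lines of the low-frequency analyses in \cite{KOVW,Liu} to conclude.
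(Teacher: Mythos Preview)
Your proposal is correct and follows essentially the same route as the paper's proof: far-field representation on $\partial D_\rho$, the divergence-theorem conversion of the $\partial_\nu u^i$ term to an $O(\rho^N)$ volume integral, and the rescaling $y\mapsto\rho y$ to control $\|v^s\|_{L^2(\partial D_\rho)}$ via a boundary integral equation on the fixed surface $\partial D$. The one place where the paper is more concrete than your sketch is precisely the 2D low-frequency issue you flag at the end: rather than invoking abstract well-posedness at wavenumber $k\rho$ and then arguing separately about the constant mode, the paper writes down the integral equation $(\tfrac12 I-\mathcal{K})v^s=g$ arising from Green's representation, expands the Helmholtz double-layer kernel as the Laplace kernel $\mathcal{K}_0$ plus a remainder $R$ with $\|R\|_{\mathcal{L}(L^2)}\lesssim\rho\ln\rho$ (resp.\ $\rho$) in 2D (resp.\ 3D), and invokes Verchota's $L^2$ invertibility of $\tfrac12 I-\mathcal{K}_0$ on Lipschitz $\partial D$. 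This is exactly the ``expand the boundary integral operators in powers of $k\rho$'' step you propose, made explicit; it sidesteps the constant-mode worry because the limiting operator is already invertible on all of $L^2(\partial D)$, so no mean-zero projection is needed.
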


\begin{proof}
In order to ease the exposition, we shall only prove the theorem for $N=2,3$. But we would like to emphasize that for $N>3$, the proof follows by completely similar arguments.

Let
\begin{equation}\label{eq:fun}
G(x)=\frac{i}{4}\left(\frac{k}{2\pi|x|}\right)^{(N-2)/2}H_{(N-2)/2}^{(1)}(k|x|)
\end{equation}
be the outgoing Green's function. By Green's representation, we know for $x\in
\mathbb{R}^N\backslash\bar{D}_\rho$
\begin{equation}\label{eq:green formula}
\begin{split}
v^s(x)=&\int_{\partial D_\rho}\left\{\frac{\partial G(x-y)}{\partial\nu(y)}v^s(y)-G(x-y)\frac{\partial
v^s(y)}{\partial\nu(y)}\right\}\ ds(y)\\
=& (\mathcal{K} v^s)(x)+g(x),
\end{split}
\end{equation}
where we have set
\begin{align}
(\mathcal{K} v^s)(x):=&\int_{\partial D_\rho}\frac{\partial G(x-y)}{\partial\nu(y)}v^s(y)\ d s(y),\label{eq:kernel}\\
g(x):=& -\int_{\partial D_\rho} G(x-y)\frac{\partial
v^s(y)}{\partial\nu(y)}\ ds(y).\label{eq:boundary function}
\end{align}
Clearly, $v^s(x)|_{\partial D_\rho}\in H^{1/2}(\partial D_\rho)$.
By the jump properties of the double-layer potential operator $\mathcal{K}$ (cf.
\cite{McL}), we have from (\ref{eq:green formula})
\begin{equation}\label{eq:integral equation 1}
\frac 1 2 v^s(x)=(\mathcal{K}v^s)(x)+g(x),\ \ \ x\in\partial D_\rho.
\end{equation}
Let $x'=x/\rho$, then (\ref{eq:integral equation 1}) is read as
\begin{equation}\label{eq:integral equation 2}
\frac 1 2 v^s(\rho x')=(\mathcal{K}v^s)(\rho x')+g(\rho x'),\quad x'\in\partial D.
\end{equation}

Next, we claim
\begin{equation}\label{eq:estimate g}
\|g(\rho\ \cdot)\|_{L^2(\partial D)}\leq C\rho,
\end{equation}
where $C$ remains uniform as $\rho\rightarrow 0^+$. In the sequel, we shall make use of the following asymptotic developments
of the 2D $G(x)$ (cf. \cite{ColKre}),
\begin{equation}\label{eq:asym fundamental}
G(x)=-\frac{1}{2\pi}\ln |x|+\frac{i}{4}-\frac{1}{2\pi}\ln\frac{k}{2}
-\frac{E}{2\pi}+\mathcal{O}
\left(|x|^2\ln |x|\right)
\end{equation}
for $|x|\rightarrow 0$, where $E$ is the Euler's constant. Moreover, we know that
\begin{equation}\label{eq:f1}
G(x)=\frac{e^{ik|x|}}{4\pi|x|}\ \ \mbox{when\ $N=3$}.
\end{equation}
In order to prove (\ref{eq:estimate g}), we first assume $x\in\partial D_{t\rho}$ with $1<t\leq 2$. Then by Green's formula, we have
\begin{equation}\label{eq:ee}
\begin{split}
&\int_{\partial D_{\rho}} G(x-y)\frac{\partial e^{iky\cdot d}}{\partial\nu(y)}\ ds(y)\\
=&\int_{D_\rho}\Delta_ye^{iky\cdot d} G(x-y)\ dy+\int_{D_\rho}\nabla_y G(x-y)\cdot\nabla_y e^{iky\cdot d}\ dy\\
=&-k^2\int_{D_\rho}e^{iky\cdot d} G(x-y)\ dy+\int_{D_\rho}\nabla_y G(x-y)\cdot\nabla_ye^{iky\cdot d}\ dy.
\end{split}
\end{equation}
Using (\ref{eq:asym fundamental}) and (\ref{eq:f1}), we have for $x'\in D_t$
\[
\begin{split}
|g_1(\rho x')|&=k^2|\int_{D_\rho}e^{iky\cdot d} G(\rho x'-y)\ dy|\\
&\leq k^2 \int_{D} |G(\rho (x'-y'))\rho^n dy'|.
\end{split}
\]
By the mapping properties of volume potential operator, one has
\begin{equation}\label{eq:ae1}
|g_1(x)|_{C(\partial D_{t\rho})}\leq C\rho,
\end{equation}
where $C$ is independent of $\rho$ and $t$. In like manner, one can show that
\begin{equation}\label{eq:ae2}
|g_2(x)|_{C(\partial D_{t\rho})}=|\int_{D_\rho}\nabla_y G(x-y)\cdot\nabla_ye^{iky\cdot d}\ dy|_{C(\partial D_{t\rho})}\leq C\rho.
\end{equation}
By (\ref{eq:ae1}) and (\ref{eq:ae2}), we see
\begin{equation}\label{eq:ae3}
|g(x)|_{C(D_{t\rho})}\leq C \rho.
\end{equation}
Next, by the mapping property of single-layer potential operator, we know
\begin{equation}\label{eq:ee2}
g(x)|_{\partial D_\rho}=\lim_{t\rightarrow 1^+}\left(g(x)|_{\partial D_{t\rho }}\right),
\end{equation}
which together with (\ref{eq:ae3}) implies (\ref{eq:estimate g}).

We proceed to the integral equation (\ref{eq:integral equation 1}). First, by using change of variables in integrals, it is straightforward to show that
\begin{equation}\label{eq:decompose}
(\mathcal{K} v^s)(\rho x')=(\mathcal{K}_0 v^s(\rho\ \cdot))(\rho x')+(R v^s(\rho\ \cdot))(\rho x'),\quad x'\in \partial D,
\end{equation}
where $\mathcal{K}_0$ is an integral operator with the kernel given by
\[
G_0(x-y)=\begin{cases}
 \displaystyle{-\frac{1}{2\pi}}\ln|x-y|\quad & N=2,\\
 \displaystyle{\frac{1}{4\pi}\frac{1}{|x-y|}}\quad & N=3,
\end{cases}
\]
which is the fundamental solution to $-\Delta$; and $R$ satisfies
\begin{equation}
\|R\|_{\mathcal{L}(L^2(D), L^2(D))}\lesssim \begin{cases}
& \rho\ln\rho\ \ \mbox{when $N=2$};\\
& \rho\qquad\ \mbox{when $N=3$}.
\end{cases}
%\ \rho\ln\rho\ \ \mbox{when $N=2$};\ \rho\ \ \mbox{when $N=3$}.
\end{equation}
Hence, the integral equation (\ref{eq:integral equation 2}) can be reformulated as
\begin{equation}\label{eq:integral equation 3}
\left[(\frac{1}{2} I-\mathcal{K}_0-R) v^s(\rho\ \cdot)\right](\rho x')=g(\rho x')\quad x'\in D.
\end{equation}
By the well-known result in \cite{Ver}, $I-\frac 1 2 \mathcal{K}_0$ is invertible from $L^2(\partial D)$ to $L^2(\partial D)$. Then, by using
(\ref{eq:estimate g}), we have from (\ref{eq:integral equation 3}) that
\begin{equation}\label{eq:estimate phi}
\|v^s(\rho\ \cdot)\|_{L^2(\partial D)}\leq C \|g(\rho\ \cdot)\|_{L^2(\partial D)}\leq C\rho.
\end{equation}
Noting $\|v^s(\cdot)\|_{L^2(\partial D_\rho)}=\rho^{(N-1)/2}\|v^s(\rho\ \cdot)\|_{L^2(\partial D)}$, we further have from (\ref{eq:estimate phi})
that
\begin{equation}\label{eq:estim}
\|v^s\|_{L^2(\partial D_\rho)}\leq
\begin{cases}
C\rho^{3/2}\quad & N=2,\\
C\rho^2\ \ & N=3.
\end{cases}
\end{equation}

Finally, by letting $|x|\rightarrow \infty$ in (\ref{eq:green formula}), we have
\begin{equation}\label{eq:far field}
\mathcal{A}(\hat{x}, d)=\gamma\int_{\partial D_\rho}\left[\frac{\partial e^{-i k\hat{x}\cdot y}}{\partial\nu(y)}v^s(y)+e^{-i k\hat{x}\cdot y}\frac{\partial e^{iky\cdot d}}{\partial\nu(y)}\right]\ ds(y),
\end{equation}
where $\gamma=e^{i\frac\pi 4}/\sqrt{8\pi k}$ when $N=2$, and
$\gamma=1/4\pi$ when $N=3$. By using (\ref{eq:estim}) and the
Schwarz inequality in (\ref{eq:far field}), we have
\begin{equation}\label{eq:eee1}
\left|\int_{\partial D_\rho}\frac{\partial e^{-i k\hat{x}\cdot y}}{\partial\nu(y)} v^s(y)\ ds(y)\right|\leq C\rho^N.
\end{equation}
By Green's formula, we have
\begin{equation}\label{eq:eee2}
\begin{split}
&\left|\int_{\partial D_\rho}e^{-ik\hat{x}\cdot y}\frac{\partial e^{iky\cdot d}}{\partial\nu(y)}\ ds(y)\right|\\
=& \left|\int_{D_\rho}(\Delta e^{iky\cdot d})e^{-ik\hat{x}\cdot y}\ dy+\int_{D_\rho}\nabla e^{iky\cdot d}\cdot\nabla e^{-ik\hat{x}\cdot y}\ dy\right|\\
=& |k^2(d\cdot\hat{x}-1)\int_{D_\rho} e^{ik(d-\hat{x})\cdot y}\ dy| \\
\leq & C\rho^N.
\end{split}
\end{equation}
By (\ref{eq:far field}), (\ref{eq:eee1}) and (\ref{eq:eee2}), we have (\ref{eq:full cloaking}).

The proof is completed.

\end{proof}

By Theorem~\ref{thm:main}, we know the construction (\ref{eq:partial cloaking device}) gives a near-invisibility cloaking within $\rho^N$ of the perfect cloaking. For a special case by taking $D_\rho=B_\rho$, namely, the central ball of radius $\rho>0$, using wave functions expansion, one can show (cf. \cite{LiuIMA})
\begin{equation}\label{eq:ff1}
\mathcal{A}(\hat{x},d)=-e^{-i\frac\pi 4}\sqrt{\frac{2}{\pi k}}\left[\frac{J_0'(k\rho)}{{H_0^{(1)}}'(k\rho)}+2\sum_{n=1}^\infty\frac{J_n'(k\rho)}{{H_n^{(1)}}'(k\rho)}\cos n\theta\right]
\end{equation}
in $\mathbb{R}^2$, where $\theta=\angle(\hat{x},d)$; and
\begin{equation}\label{eq:ff2}
\mathcal{A}(\hat{x},d)=\frac{i}{k}\sum_{n=0}^\infty (2n+1)\frac{j_n'(k\rho)}{{h_n^{(1)}}'(k\rho)} P_n(\cos\theta)
\end{equation}
in $\mathbb{R}^3$, where $P_n$ is the Legendre polynomial of degree $n$. By the asymptotic developments of spherical Bessel functions (cf. \cite{LiuIMA}), one has from (\ref{eq:ff2})
\begin{equation}\label{eq:b1}
\mathcal{A}(\hat{x},d)=\frac{i}{k}\left(\frac{\cos\theta}{2}-\frac 1 3\right)\left(k\rho\right)^3+\mathcal{O}((k\rho)^5).
\end{equation}
Similarly, by (\ref{eq:ff1}) one can show that in $\mathbb{R}^2$,
\begin{equation}\label{eq:b2}
\mathcal{A}(\hat{x},d)=-e^{-i\frac\pi 4}\sqrt{\frac{2\pi}{k}}\left(\frac{\cos\theta}{2}-\frac 1 4\right)(k\rho)^2+\mathcal{O}((k\rho)^4).
\end{equation}
By (\ref{eq:b1}) and (\ref{eq:b2}), it is readily seen that the estimates in Theorem~\ref{thm:main} are optimal for full scattering measurements, namely, $\hat{x}\in\mathbb{S}^{N-1}$ and $d\in\mathbb{S}^{N-1}$. Nevertheless, it is interesting to note from (\ref{eq:b1}) and (\ref{eq:b2}) that for some specific scattering measurements, e.g., $\mathcal{A}(\hat{x},d)$ with $\angle(\hat{x},d)=\pm \arccos\frac 2 3$ in 3D, and with $\angle(\hat{x},d)=\pm\frac{\pi}{3}$ in 2D, one would have even more enhanced invisibility cloaking effects. Physically speaking, the cloaking effect would be stronger in the backward scattering region, i.e. $|\angle(\hat{x},d)|>\pi/2$, than that in the forward scattering region, i.e. $|\angle(\hat{x},d)|>\pi/2$. This could be partly seen from (\ref{eq:far field})--(\ref{eq:eee2}), and (\ref{eq:b1})--(\ref{eq:b2}).

\section{Near-cloak construction with a lossy layer}

In this section, we shall develop a lossy approximate cloaking scheme. To that end, we first introduce the following transformation $T:\mathbb{R}^N\rightarrow\mathbb{R}^N$,
\begin{equation}\label{eq:trans2}
y=T(x):=\begin{cases}
\qquad x\qquad &\mbox{for\ \ $x\in\mathbb{R}^N\backslash\bar{\Omega}$},\\
\quad F_\rho(x) & \mbox{for\ \ $x\in\Omega\backslash D_\rho$},\\
\qquad \frac{x}{\rho} & \mbox{for\ \  $x\in D_\rho$},
\end{cases}
\end{equation}
where $F_\rho$ is given in (\ref{eq:trans}). Let
\begin{equation}\label{eq:phy1}
\{\mathbb{R}^N; \sigma, q\}=\begin{cases}
\qquad I, 1\qquad & \mbox{in\ \ $\mathbb{R}^N\backslash\Omega$},\\
\quad T_* I, T_* 1 & \mbox{in\ \ $\Omega\backslash D$},\\
\quad T_*\sigma_l, T_*q_l & \mbox{in\ \ $D\backslash D_{1/2}$},\\
\quad \sigma_a', q_a' & \mbox{in\ \ $D_{1/2}$},
\end{cases}
\end{equation}
be the cloaking device in the physical space. Here, $\{D_{1/2}; \sigma_a', q_a'\}$ is an arbitrary but regular medium, which
represents the target object being cloaked; and
\begin{equation}\label{eq:lossy}
\{D_{\rho}\backslash D_{\rho/2}; \sigma_l, q_l\}
\end{equation}
is a lossy layer whose parameters shall be specified in the following. Similar to our earlier argument in Section 3, by transformation acoustics we see that the scattering amplitude in the physical space corresponding to the cloaking device is the same as the one in the virtual space. In the virtual space, the wave scattering is governed by the following PDE system
\begin{equation}\label{eq:virt1}
\begin{cases}
(\Delta+k^2) u=0\quad & \mbox{in\ \ $\mathbb{R}^N\backslash \bar{D}_\rho$},\\
\nabla\cdot(\sigma_l\nabla u)+k^2 q_l u=0\quad & \mbox{in\ \ $D_\rho\backslash \bar{D}_{\rho/2}$},\\
\nabla\cdot(\sigma_a\nabla u)+k^2 q_a u=0\quad & \mbox{in\ \ $D_{\rho/2}$},
\end{cases}
\end{equation}
where
\begin{equation}\label{eq:cloaked contents}
\{D_{\rho/2}; \sigma_a, q_a\}=(T^{-1})_*\{D_{1/2}; \sigma_a', q_a'\}
\end{equation}
is arbitrary but regular; and $u\in H_{loc}^1(\mathbb{R}^N)$ satisfies
\begin{align*}
& u(x)=e^{ikx\cdot d}+u^s(x)\quad\mbox{for\ \ $x\in\mathbb{R}^N\backslash D_{\rho}$},\\
& \displaystyle{\lim_{r\rightarrow \infty} r^{(N-1)/2}\left\{\frac{\partial u^s}{\partial r}-ik u^s\right\}=0.}
\end{align*}
We shall choose
\begin{equation}\label{eq:lossy para}
\sigma_l=C \rho^{2+2\delta}I\quad\mbox{and}\quad q_l=a+i b
\end{equation}
with $C, \delta, a, b$ some fixed positive constants in our construction. We shall show that it will yield an enhanced approximate cloaking scheme. However, the proof for the general case with general geometry and arbitrary cloaked contents is rather technical and lengthy, which we choose to extend to full details in our forthcoming paper \cite{LiLiuSun}. In this section, we shall consider the special case with spherical geometry and uniform cloaked contents. In the sequel, we let $D_\rho=B_\rho$ and, $\sigma_a'$ and $q_a'$ be arbitrary positive constants but independent of $\rho$. With a bit abusing of notation, we shall also write
\[
\sigma_l=C\rho^{2+2\delta}.
\]
By (\ref{eq:cloaked contents}), one can show by direct calculations that in the virtual space
\[
\sigma_{a}, q_a = \begin{cases} \sigma_{a}',\frac{q_{a}'}{\rho^{2}} &\mbox{in $D_{\rho/2}$ when $N=2$} \\
\frac{\sigma_{a}'}{\rho}, \frac{q_{a}'}{\rho^{3}} &\mbox{in $D_{\rho/2}$ when $N=3$}
\end{cases}.
\]

For the PDE system (\ref{eq:virt1}), we let $u = u_{0}$ in $\mathbb{R}^N\backslash \bar{D}_{\rho}$, $u = u_{2}$ in $D_{\rho}\backslash {\bar{D}}_{\rho/2}$ and $u = u_{int}$ in $D_{\rho/2}$. By transmission conditions, we have
\begin{equation}\label{boundary:big}
u_{2}=u_{0}\quad\mbox{and}\quad \sigma_{l}\frac{\partial
u_{2}}{\partial \nu}=\frac{\partial u_{0}}{\partial \nu}\quad
\mbox{on\ $\partial D_\rho$},
\end{equation}
and
\begin{equation}\label{boundary:small}
u_{2}=u_{int}\quad\mbox{and}\quad
\sigma_{l}\frac{\partial u_{2}}{\partial \nu} = \sigma_{a} \frac{\partial u_{int}}{\partial \nu}\quad\mbox{on\ $\partial D_{\rho/2}$}.
\end{equation}
Set $\tilde{k} = k \sqrt{\frac{q_{l}}{\sigma_{l}}}$ and $k_{2} =
k\sqrt{\frac{q_{a}}{\sigma_{a}}}$. We choose the complex branch of
$\tilde{k}$ such that $\Im(\tilde{k})>0$.

We first consider the 2D case. By \cite{ColKre}, one has the
following series expansions
\begin{equation}\label{eq:series}
\begin{cases}
\displaystyle{u_{0}(x) = e^{ikx \cdot d} + u^{s} =  \sum_{n = -\infty}^{\infty}i^{n}J_{n} (k|x|)e^{in\theta} + \sum_{n = -\infty}^{\infty} d_{n}H_{n}^{(1)}(k|x|)e^{in\theta}}, \\
\displaystyle{u_{2}(x) = \sum_{n = -\infty}^{\infty}a_{n}J_{n}(\tilde{k}|x|)e^{in \theta} + \sum_{n = -\infty}^{\infty}b_{n}H_{n}^{(1)}(\tilde{k}|x|)e^{in\theta}},\\
\displaystyle{u_{int} = \sum_{n = -\infty}^{\infty}c_{n}J_{n}(k_{2}|x|)e^{in \theta}.}
\end{cases}
\end{equation}

By (\ref{boundary:big}) and (\ref{eq:series}), we have
\begin{equation}\label{eq:t1}
\begin{cases}
a_{n}J_{n}(\tilde{k}\rho) + b_{n} H_{n}^{(1)}(\tilde{k}\rho)  = i^{n} J_{n}(k\rho) + d_{n}H_{n}^{(1)}(k\rho) \\
\sigma_{l} [a_{n} \tilde{k} J_{n}'(\tilde{k}\rho) + b_{n} \tilde{k} {H_{n}^{(1)}}'(\tilde{k}\rho)]=i^{n}kJ_{n}'(k\rho) + d_{n}k {H_{n}^{(1)}}'(k\rho).
\end{cases}
\end{equation}
In like manner, by (\ref{boundary:small}) and (\ref{eq:series}) we have
\begin{equation}\label{eq:t2}
\begin{cases}
a_{n}J_{n}(\tilde{k} \rho/2) + b_{n} H_{n}{(1)}(\tilde{k}\rho/2) = c_{n} J_{n}(k_{2}\rho/2) \\
\sigma_{l} [a_{n}\tilde{k}J_{n}'(\tilde{k} \rho/2) + b_{n} \tilde{k} {H_{n}^{(1)}}'(\tilde{k}\rho/2)] = \sigma_{a} c_{n} k_{2} J_{n}'(k_{2}\rho/2).
\end{cases}
\end{equation}
Here $J_{n}'(\tilde{k}\rho) = \frac{dJ_{n}(z)}{dz}|_{z = \tilde{k}\rho}$ and $J_{n}'(\tilde{k}\rho/2) = \frac{dJ_{n}(z)}{dz}|_{z = \tilde{k}\rho/2}$. ${H_{n}^{(1)}}'(\tilde{k}\rho)$, ${H_{n}^{(1)}}'(\tilde{k}\rho/2)$, $J_{n}'(k\rho)$, ${H_{n}^{(1)}}'(k\rho)$,
$J_{n}'(k_{2}\rho/2)$, ${H_{n}^{(1)}}'(k_{2}\rho/2)$ are understood in the same sense.
By letting $C_{0} = 1/{\sqrt{\sigma_{l}q_{l}}}$ and $A = \sqrt{q_{a}\sigma_{a}} = \sqrt{q_{a}'\sigma_{a}'}/\rho$, $k_{2} = \frac{k}{\rho}\sqrt{\frac{q_{a}'}{\sigma_{a}'}}$, (\ref{eq:t1}) and (\ref{eq:t2}) are read as
\begin{equation}\label{boundary1:big}
\begin{cases}
a_{n}J_{n}(\tilde{k}\rho) + b_{n} H_{n}^{(1)}(\tilde{k}\rho)  = i^{n} J_{n}(k\rho) + d_{n}H_{n}^{(1)}(k\rho) \\
[a_{n}  J_{n}'(\tilde{k}\rho) + b_{n} {H_{n}^{(1)}}'(\tilde{k}\rho)]=C_{0}[i^{n}J_{n}'(k\rho) + d_{n} {H_{n}^{(1)}}'(k\rho)],
\end{cases}
\end{equation}
and
\begin{equation}\label{boundary1:small}
\begin{cases}
a_{n}J_{n}(\tilde{k} \rho/2) + b_{n} H_{n}^{(1)}(\tilde{k}\rho/2) = c_{n} J_{n}(k_{2}\rho/2) \\
[a_{n}J_{n}'(\tilde{k} \rho/2) + b_{n}{H_{n}^{(1)}}'(\tilde{k}\rho/2)] =C_{0} A c_{n} J_{n}'(k_{2}\rho/2).
\end{cases}
\end{equation}
Noting $k_2\rho=k\sqrt{q_a'/\sigma_a'}$, if $J_n(k_2\rho/2)\neq 0$ then by direct calculations, we have from (\ref{boundary1:small})
\[
c_{n} = \frac{a_{n}J_{n}(\tilde{k}\rho/2) + b_{n} H_{n}^{(1)}(\tilde{k}\rho/2)}{J_{n}(k_{2}\rho/2)}
\]
and
\begin{equation}\label{an:bn}
b_{n} = - \frac{J_{n}'(\tilde{k}\rho/2) - C_{0} A \frac{J_{n}'(k_{2}\rho/2)}{J_{n}(k_{2}\rho/2)} J_{n}(\tilde{k}\rho/2)}
{{H_{n}^{(1)}}'(\tilde{k}\rho/2) - C_{0} A \frac{J_{n}'(k_{2}\rho/2)}{J_{n}(k_{2}\rho/2)} H_{n}^{(1)}(\tilde{k}\rho/2)} a_{n}.
\end{equation}
In case $J_n(k_2\rho/2)=0$, we have from the first equation in (\ref{boundary1:small}) that
\begin{equation}\label{eq:nn1}
b_n=-\frac{J_n(\tilde{k}\rho/2)}{H_n^{(1)}(\tilde{k}\rho/2)}a_n.
\end{equation}
Let $\Upsilon_0$ denote the fraction in (\ref{an:bn}) or (\ref{eq:nn1}) and hence $b_{n} = \Upsilon_{0} a_{n}$. Plugging $b_n$ into (\ref{boundary1:big}), we have by
straightforward calculations
%\begin{equation}\label{equation:simple}
%\begin{cases}
%a_{n}[J_{n}(\tilde{k}\rho) + \Upsilon_{0} H_{n}^{(1)}(\tilde{k}\rho)]  = i^{n} J_{n}(k\rho) + d_{n}H_{n}^{(1)}(k\rho) \\
%a_{n}  [J_{n}^{'}(\tilde{k}\rho) + \Upsilon_{0} {H_{n}^{(1)}}^{'}(\tilde{k}\rho)]=C_{0}[i^{n}J_{n}^{'}(k\rho) + d_{n} {H_{n}^{(1)}}^{'}(k\rho)].
%\end{cases}
%\end{equation}
%and
$$
a_{n} = \frac{i^{n} J_{n}(k\rho) + d_{n}H_{n}^{(1)}(k\rho)}{J_{n}(\tilde{k}\rho) + \Upsilon_{0} H_{n}^{(1)}(\tilde{k}\rho)},
$$
and
\begin{equation}\label{equation:dn}
d_{n} = - \frac{i^{n}J_{n}'(k \rho) - \frac{1}{C_{0}} \frac{J_{n}'(\tilde{k}\rho) + \Upsilon_{0} {H_{n}^{(1)}}'(\tilde{k}\rho)}{J_{n}(\tilde{k}\rho) + \Upsilon_{0} H_{n}^{(1)}(\tilde{k}\rho)} i^{n}J_{n}(k\rho)}
{{H_{n}^{(1)}}'(k \rho) - \frac{1}{C_{0}} \frac{J_{n}'(\tilde{k}\rho) + \Upsilon_{0} {H_{n}^{(1)}}'(\tilde{k}\rho)}{J_{n}(\tilde{k}\rho) + \Upsilon_{0} H_{n}^{(1)}(\tilde{k}\rho)} H_{n}^{(1)}(k\rho)}.
\end{equation}

We next investigate the asymptotic development of
\begin{equation}\label{equation:h}
\mathcal{H}(\sigma_{l}, \rho) := \frac{1}{C_{0}} \frac{J_{n}'(\tilde{k}\rho) + \Upsilon_{0} {H_{n}^{(1)}}'(\tilde{k}\rho)}{J_{n}(\tilde{k}\rho) + \Upsilon_{0} H_{n}^{(1)}(\tilde{k}\rho)}=\frac{\frac{1}{C_{0}} \frac{J_{n}'(\tilde{k}\rho)}{J_{n}(\tilde{k}\rho)} + \frac{1}{C_{0}}\Upsilon_{0} \frac{{H_{n}^{(1)}}'(\tilde{k}\rho)}{J_{n}(\tilde{k}\rho)}}{1 + \Upsilon_{0} \frac{H_{n}^{(1)}(\tilde{k}\rho)}{J_{n}(\tilde{k}\rho)}}.
\end{equation}
Clearly, we only need study the asymptotic behaviors of $\frac{1}{C_{0}} \frac{J_{n}'(\tilde{k}\rho)}{J_{n}(\tilde{k}\rho)}$, $ \frac{1}{C_{0}}\Upsilon_{0} \frac{{H_{n}^{(1)}}'(\tilde{k}\rho)}{J_{n}(\tilde{k}\rho)}$ and $\Upsilon_{0} \frac{H_{n}^{(1)}(\tilde{k}\rho)}{J_{n}(\tilde{k}\rho)}$. We note the following fact due to (\ref{eq:lossy para}),
\[
\Re(\tilde{k}\rho) \rightarrow +\infty\quad\mbox{and}\quad \Im(\tilde{k}\rho) \rightarrow +\infty\quad\mbox{as\ \ $\rho\rightarrow 0^+$},
\]
which implies the following asymptotic developments for $z=\tilde{k}\rho$ (see formulas 9.2.1, 9.2.3 in \cite{AI}),
\begin{equation}\label{eq:a1}
\begin{cases}
J_{n}(z) \sim \sqrt{\frac{2}{\pi z}} \cos(z - \frac{n\pi}{2} - \frac{\pi}{4}) + e^{|\Im(z)|}\mathcal{O}(|z|^{-1}),  \ \  |\arg{z}| < \pi  \\
H_{n}^{(1)}(z) \sim \sqrt{\frac{2}{\pi z}} e^{i(z-\frac{n\pi}{2} - \frac{\pi}{4})} , \ \  -\pi < \arg{z} < 2 \pi
\end{cases}
\end{equation}
%And it is right for all $J_{\nu}(z)$ and $H_{\nu}^{(1)}(z)$ with $\nu$ is real and not less than 0 \cite{AI}.
We also note the following recurrence relation (see formula 9.1.27 in \cite{AI}) for the subsequent use,
\begin{equation}\label{eq:r1}
\mathcal{B}_{n}'(z) = \frac{n}{z}\mathcal{B}_{n}(z) - \mathcal{B}_{n+1}(z)
\end{equation}
where $\mathcal{B}_{n}(z)=J_{n}(z)$ or $H_{n}^{(1)}(z)$. Since $J_{n}(z)=(-1)^{n}J_{n}(z)$ and $H_{n}^{(1)}(z)=$$(-1)^{n}H_{-n}^{(1)}(z)$ (see formula 9.1.5 in \cite{AI}), we only need consider the case with $n\geq 0$ in the sequel.

Noting $\cos(z)=\frac{e^{iz}+e^{-iz}}{2}$, by (\ref{eq:a1}) we further have as $\Re(z), \Im(z)\rightarrow\infty$
\begin{equation}\label{asym:special}
\begin{cases}
J_{n}(z)  \sim \sqrt{\frac{1}{2\pi z}} e^{|\Im(z)|}e^{i(-\Re(z)+\frac{n\pi}{2} + \frac{\pi}{4})},  \ \  |\arg{z}| < \pi  \\
H_{n}^{(1)}(z) \sim \sqrt{\frac{2}{\pi z}} e^{-\Im(z)}e^{i(\Re(z)-\frac{n\pi}{2} - \frac{\pi}{4})}, \ \  -\pi < \arg{z} < 2 \pi \\
|{H_{n}^{(1)}}'(z)| \sim \sqrt{\frac{2}{\pi |z|}} e^{-\Im(z)},\ \  -\pi < \arg{z} < 2 \pi
\end{cases}
\end{equation}
It is remarked that the third equation in (\ref{asym:special}) is obtained by using the recurrence relation (\ref{eq:r1}).
Hence we see for $z=\tilde{k}\rho$, $J_{n}(z)$ blows up exponentially, while $H_{n}^{(1)}(z)$ decreases exponentially  as $\rho\rightarrow 0^+$.
In the sequel, we let $\sqrt{a+ib}=\alpha+i\beta$ with $\alpha, \beta>0$. For $\frac{1}{C_{0}} \frac{J_{n}'(\tilde{k}\rho)}{J_{n}(\tilde{k}\rho)}$, as $\rho\rightarrow 0^+$,
by (\ref{asym:special}) we have
\begin{equation}\label{eq:1}
\frac{1}{C_{0}} \frac{J_{n}'(\tilde{k}\rho)}{J_{n}(\tilde{k}\rho)}=\frac{1}{C_0}\frac{\frac{n}{\tilde{k}\rho}J_{n}(\tilde{k}\rho) - J_{n+1}(\tilde{k}\rho)}{J_{n}(\tilde{k}\rho)} \sim  - e^{i\pi/2}\frac{1}{C_{0}} = - e^{i\pi/2}(\alpha+i\beta) \sqrt{\sigma_{l}} \rightarrow +0.
\end{equation}
For $ \Upsilon_{0} \frac{H_{n}^{(1)}(\tilde{k}\rho)}{J_{n}(\tilde{k}\rho)}$, if $\Upsilon_0$ is the fraction in (\ref{an:bn}), then we have
\begin{equation}\label{equation:1}
\begin{split}
& \Upsilon_{0} \frac{H_{n}^{(1)}(\tilde{k}\rho)}{J_{n}(\tilde{k}\rho)}
=- \frac{J_{n}'(\tilde{k}\rho/2) - C_{0} A \frac{J_{n}'(k_{2}\rho/2)}{J_{n}(k_{2}\rho/2)} J_{n}(\tilde{k}\rho/2)}
{{H_{n}^{(1)}}'(\tilde{k}\rho/2) - C_{0} A \frac{J_{n}'(k_{2}\rho/2)}{J_{n}(k_{2}\rho/2)} H_{n}^{(1)}(\tilde{k}\rho/2)}\frac{H_{n}^{(1)}(\tilde{k}\rho)}{J_{n}(\tilde{k}\rho)}\\
=&- \frac{J_{n}'(\tilde{k}\rho/2) - C_{0} A \frac{J_{n}'(k_{2}\rho/2)}{J_{n}(k_{2}\rho/2)}J_{n}(\tilde{k}\rho/2)} {J_{n}(\tilde{k}\rho)}
\frac{H_{n}^{(1)}(\tilde{k}\rho)}{{H_{n}^{(1)}}'(\tilde{k}\rho/2) - C_{0} A \frac{J_{n}'(k_{2}\rho/2)}{J_{n}(k_{2}\rho/2)}H_{n}^{(1)}(\tilde{k}\rho/2)}\\
:=& \mathcal{Y}_1\times \mathcal{Y}_2.
\end{split}
\end{equation}
By straightforward asymptotic analysis, one can show as $\rho\rightarrow 0^+$
\begin{equation}\label{eq:tem1}
\begin{split}
\left|\mathcal{Y}_1\right|= & \left|\frac{J_{n}'(\tilde{k}\rho/2) - C_{0} A \frac{J_{n}'(k_{2}\rho/2)}{J_{n}(k_{2}\rho/2)}J_{n}(\tilde{k}\rho/2)} {J_{n}(\tilde{k}\rho)}\right|\\
\sim & \left|\left(e^{i\pi/2} - C_{0} A \frac{J_{n}'(k_{2}\rho/2)}{J_{n}(k_{2}\rho/2)}\right) \frac{J_{n}(\tilde{k}\rho/2)}{J_{n}(\tilde{k}\rho)}\right|\\
\leq & \widetilde{C}\left(1+(n+1)|\alpha+i\beta|\rho^{-2-\delta}\right)\left|e^{-\frac{\beta}{2}k \rho^{-\delta}}\right|,
\end{split}
\end{equation}
where $\widetilde{C}$ is a constant independent of $\rho$. That is, $|\mathcal{Y}_1|$ decreases to $0$ more quickly than $\rho^r$ for any $r>0$. Similarly, one can show that $|\mathcal{Y}_2|$ decreases to $0$ more quickly than $\rho^r$ for any $r>0$. Furthermore, if $\Upsilon_0$ is the fraction in (\ref{eq:nn1}), by completely similar arguments one can show that $ \Upsilon_{0} \frac{H_{n}^{(1)}(\tilde{k}\rho)}{J_{n}(\tilde{k}\rho)}$ also decays more quickly than $\rho^r$ for any $r>0$. Hence, by (\ref{equation:h})--(\ref{eq:tem1}) and $\sigma_{l} = \rho^{2+2\delta}$, we have
\begin{equation}\label{eq:tem2}
\mathcal{H}(\sigma_{l}, \rho) \sim \frac{1}{C_{0}} \frac{J_{n}'(\tilde{k}\rho)}{J_{n}(\tilde{k}\rho)} \sim  - e^{i\pi/2}\frac{1}{C_{0}} = - e^{i\pi/2}(\alpha+i\beta) \rho^{1+\delta} \quad\mbox{as\ \ $\rho\rightarrow 0^+$}.
\end{equation}
Now, by (\ref{eq:tem2}) and (\ref{equation:dn}) we have
\begin{equation}
d_{n} = - \frac{i^{n}J_{n}'(k \rho) +e^{i\pi/2}(\alpha+\beta i) \rho^{1+\delta} i^{n}J_{n}(k\rho)}
{{H_{n}^{(1)}}'(k \rho) + e^{i\pi/2} (\alpha+\beta i) \rho^{1+\delta} H_{n}^{(1)}(k\rho)}.
\end{equation}
Using the asymptotic developments of Bessel functions and their derivatives (cf. \cite{LiuIMA}), we further have
\begin{equation}
\begin{cases}
d_{0} \sim - \dfrac{ -k \rho/2  + e^{i\pi/2}(\alpha+i\beta) \rho^{1+\delta} }{i\frac{2}{\pi k\rho}+  e^{i\pi/2}(\alpha+i\beta) \rho^{1+\delta} \frac{2i}{\pi}ln(k\rho/2) }, \ \ n = 0, \\
d_{n}  \sim -\dfrac{\frac{i^{n} n (k\rho)^{n-1}}{2^{n} \Gamma(n+1)} + e^{i\pi/2}(\alpha+i\beta) \rho^{1+\delta} i^{n} \frac{(k\rho)^{n}}{2^{n}\Gamma(n+1)}}
{\frac{i 2^{n} n! }{\pi (k\rho)^{n+1}} -  e^{i\pi/2}(\alpha+i\beta) \rho^{1+\delta} i \frac{2^{n}(n-1)!}{\pi (k\rho)^{n}}},  \ n \in \mathbb{N},
\end{cases}
\end{equation}
which imply
\begin{equation}\label{eq:temp4}
\begin{cases}
d_{0} \sim \mathcal{O}(\rho^{2}), \ \  n = 0 \\
d_{n} \sim \mathcal{O}(\rho^{2n}), \ \ n \in  \mathbb{N}
\end{cases}
\end{equation}
and
\begin{equation}\label{eq:temp1}
\begin{split}
&\left|d_{0} -\left[-\frac{J_{0}'(k\rho)}{{H_{0}^{(1)}}'(k\rho)}\right]\right| \leq \pi k |\alpha+i\beta| \rho^{2+\delta},\\
&\left|d_{n} -  \left[-\frac{i^{n}J_{n}'(k\rho)}{{H_{n}^{(1)}}'(k\rho)}\right]\right| \leq \frac{4\pi |\alpha+i\beta|}{k^{1+\delta}} \frac{1}{(2^{n}n!)^{2}} (k\rho)^{2n+2+\delta}.
\end{split}
\end{equation}

Since
\begin{equation}\label{eq:tem3}
u^{s}(x)=\sum_{n = -\infty}^{\infty} d_{n}H_{n}^{(1)}(k|x|)e^{in\theta},
\end{equation}
by taking $|x|\rightarrow+\infty$, together with (\ref{eq:temp4}), one has by direct calculations that the corresponding scattering amplitude satisfies
\begin{equation}
\left|\mathcal{A}(\hat{x},d)\right|\leq C\rho^2,
\end{equation}
where $C$ is a positive constant that remains uniform as $\rho\rightarrow 0^+$. That is, the construction (\ref{eq:phy1}) gives an near-cloaking device within $\rho^2$ of the ideal cloaking.

Now, we look into the series representation of the scattered wave filed (\ref{eq:tem3}). If $D_\rho$ is a sound-hard obstacle, the scattered wave corresponding to $e^{ikx\cdot d}$ is given by (see eqn. (3.19) in \cite{LiuIMA})
\begin{equation}\label{eq:sh}
\tilde{u}^{s}(x) = -\sum_{n = -\infty}^{\infty} \frac{i^{n}J_{n}'(k\rho)}{{H_{n}^{(1)}}'(k\rho)}H_{n}^{(1)}(kx)e^{in\theta}.
\end{equation}
By (\ref{eq:temp1}), (\ref{eq:tem3}) and (\ref{eq:sh}), one has by direct verifications that for $\rho$ sufficiently small
\begin{equation}\label{eq:sh2}
|u^s(x)-\tilde{u}^s(x)|\leq C\rho^{2+\delta}
\end{equation}
for any $x\in\mathbb{R}^2\backslash \bar{B}_{\epsilon_0}$ with $\epsilon_0>\rho$ a fixed constant,
where $C$ depends only on $k$ but independent of $\rho$.
We remark that the estimate in (\ref{eq:sh2}) would reduce to $C\rho^{1+\delta}$ for $x\in\mathbb{R}^2\backslash \bar{D}_\rho$. Hence, the construction (\ref{eq:phy1}) is actually a finite realization of the sound-hard lining construction (\ref{eq:partial cloaking device}).

The 3D case could be proved following similar arguments, which we shall sketch in the following. We shall make use of the same notations $u_0$, $u_2$, $u_{int}$, $\tilde{k}$ and $k_2$ etc.. We have the following series representations of the wave fields,
\begin{equation}\label{eq:3d}
\begin{split}
u_{0}(x) =& e^{ikx \cdot d}  + u^{s} (x) \\
=&\sum_{n = 0}^{\infty} \sum_{m = -n}^{n} i^{n} 4 \pi \overline{Y_{n}^{m}(d)}j_{n}(k|x|)Y_{n}^{m}(\hat{x}) +
\sum_{n =0}^{\infty} \sum_{n = -m}^{m} d_{n}^{m} h_{n}^{(1)}(k|x|)Y_{n}^{m}(\hat{x}), \\
u_{2}(x) =& \sum_{n = 0}^{\infty} \sum_{m = -n}^{n} a_{n}^{m}j_{n}(\tilde{k}|x|)Y_{n}^{m}(\hat{x}) + \sum_{n = 0}^{\infty} \sum_{m = -n}^{n} b_{n}^{m}h_{n}^{(1)}(\tilde{k}|x|)Y_{n}^{m}(\hat{x}), \\
u_{int}(x) =& \sum_{n = 0}^{\infty} \sum_{m = -n}^{n} c_{n}^{m} j_{n}^{m}(k_{2}|x|)Y_{n}^{m}(\hat{x}).
\end{split}
\end{equation}
By the transmission conditions, respectively on $\partial D_\rho$ and $\partial D_{\rho/2}$, we have
\begin{equation}\label{boudary3d:big}
\begin{cases}
a_{n}^{m}j_{n}(\tilde{k}\rho) + b_{n}^{m}h_{n}^{(1)}(\tilde{k}\rho) = i^{n} 4\pi \overline{Y_{n}^{m}(d)}j_{n}(k\rho) + d_{n}^{m} h_{n}^{(1)}(k\rho), \\
\sigma_{l}\left[\tilde{k}a_{n}^{m}j_{n}'(\tilde{k}\rho)+ \tilde{k} b_{n}^{m} {h_{n}^{(1)}}'(\tilde{k}\rho)\right] = i^{n} 4\pi k \overline{Y_{n}^{m}(d)} j_{n}'(k\rho) + k d_{n}^{m} {h_{n}^{(1)}}'(k\rho),
\end{cases}
\end{equation}
and
\begin{equation}\label{boudary3d:small}
\begin{cases}
a_{n}^{m}j_{n}(\tilde{k}\rho/2) + b_{n}^{m}h_{n}^{(1)}(\tilde{k}\rho/2) = c_{n}^{m}j_{n}(k_{2}\rho/2), \\
\sigma_{l}\left[\tilde{k}a_{n}^{m}j_{n}'(\tilde{k}\rho/2)+ \tilde{k} b_{n}^{m} {h_{n}^{(1)}}'(\tilde{k}\rho/2)\right] = \sigma_{a} c_{n}^{m} k_{2} j_{n}'(k_{2}\rho/2).
\end{cases}
\end{equation}
For this 3D case, we have $A = \sqrt{\sigma_{a}'q_{a}'}/\rho^{2}$, $k_{2} = \frac{k}{\rho}\sqrt{\frac{q_{a}'}{\sigma_{a}'}}$.
Similar to the 2D case (see (\ref{boundary1:small}), (\ref{an:bn}) and (\ref{eq:nn1})), we would solve the linear systems (\ref{boudary3d:big}) and (\ref{boudary3d:small}) and we need to distinguish two cases $j_n(k_2\rho/2)\neq 0$ and $j_n(k_2\rho/2)=0$. In the following, we only present the more complicated case with $j_n(k_2\rho/2)\neq 0$ and the other case could be handled in a completely similar manner. By solving (\ref{boudary3d:big}) and (\ref{boudary3d:small}), we have
\begin{equation}\label{eq:dnm}
d_{n}^{m} = -\frac{i^{n}4\pi \overline{Y_{n}^{m}(d)}j_{n}'(k\rho) - \frac{1}{C_{0}} \frac{j_{n}'(\tilde{k}\rho) + \Upsilon_{0} {h_{n}^{(1)}}'(\tilde{k}\rho)}{j_{n}(\tilde{k}\rho) + \Upsilon_{0} h_{n}^{(1)}(\tilde{k}\rho)}i^{n} 4\pi \overline{Y_{n}^{m}(d)}j_{n}(k\rho)} {{h_{n}^{(1)}}'(k\rho) - \frac{1}{C_{0}} \frac{j_{n}'(\tilde{k}\rho) + \Upsilon_{0} {h_{n}^{(1)}}'(\tilde{k}\rho)}{j_{n}(\tilde{k}\rho) + \Upsilon_{0} h_{n}^{(1)}(\tilde{k}\rho)} h_{n}^{(1)}(k\rho)},
\end{equation}
where
\[
\Upsilon_0:= - \frac{j_{n}'(\tilde{k}\rho/2) - C_{0}A \frac{j_{n}'(k_{2}\rho/2)}{j_{n}(k_{2}\rho/2)}j_{n}(\tilde{k}\rho/2)}{{h_{n}^{(1)}}'(\tilde{k}\rho/2) - C_{0}A \frac{j_{n}'(k_{2}\rho/2)}{j_{n}(k_{2}\rho/2)} h_{n}^{(1)}(\tilde{k}\rho/2)}.
\]
By similar asymptotic analyses as those for the 2D case, one can show that as $\rho\rightarrow 0^+$,
{
\[
 \frac{1}{C_{0}} \frac{j_{n}'(\tilde{k}\rho) + \Upsilon_{0} {h_{n}^{(1)}}'(\tilde{k}\rho)}{j_{n}(\tilde{k}\rho) + \Upsilon_{0} h_{n}^{(1)}(\tilde{k}\rho)} \rightarrow  -\frac{1}{C_{0}}e^{i\pi/2} =  -e^{i\pi/2}(\alpha+i\beta)\sqrt{\sigma_{l}}.
\]
Then by (\ref{eq:dnm}) we have
\begin{equation}
\begin{cases}
d_{0}^{0} \sim -\dfrac{- 4\pi \overline{Y_{0}^{0}(d)}k\rho/3 + 4\pi \overline{Y_{0}^{0}(d)}e^{i\pi/2}(\alpha+i\beta) \sqrt{\sigma_{l}}} { \frac{i}{(k\rho)^{2}} - e^{i\pi/2}(\alpha+i\beta)  \sqrt{\sigma_{l}} \frac{1}{k\rho}}, \\
d_{n}^{m} \sim -\dfrac{i^{n}4\pi \overline{Y_{n}^{m}(d)} \frac{2^{n}n! n (k\rho)^{n-1}}{(2n+1)!} +   i^{n}4\pi \overline{ Y_{n}^{m}(d) } e^{i\pi/2}(\alpha+i\beta) \sqrt{\sigma_{l}}\frac{2^{n}n!  (k\rho)^{n}}{(2n+1)!}} {\frac{i(2n)!(n+1)}{2^{n}n!(k\rho)^{n+2}} - e^{i\pi/2}(\alpha+i\beta) \sqrt{\sigma_{l}} \frac{i(2n)!}{2^{n}n!(k\rho)^{n+1}}},
\end{cases}
\end{equation}
}
which in turn implies
\begin{equation}
\begin{cases}
d_{0}^{0} \sim \mathcal{O}(\rho^{3}), \\
d_{n}^{m} \sim \mathcal{O}(\rho^{2n+1}),
\end{cases}
\end{equation}
and
\begin{equation}\label{coef:diff}
\begin{split}
&\left|d_{0}^{0} - \left[-\frac{4\pi \overline{Y_{0}^{0}(d)}j_{0}'(k\rho)}{{h_{0}^{(1)}}'(k\rho)}\right]\right| \leq 8\pi k^{2} |\alpha+i\beta| |Y_{0}^{0}(d)|\rho^{3+\delta}, \\
& \left|d_{n}^{m} - \left[-\frac{i^{n}4\pi \overline{Y_{n}^{m}(d)}j_{n}'(k\rho)}{{h_{n}^{(1)}}'(k\rho)}\right]\right| \leq \frac{ 4(2^{n}n!)^{2} {4\pi |Y_{n}^{m}(d)|}|\alpha+i\beta|k^{-1-\delta}}{(2n)!(2n+1)!(n+1)}(k\rho)^{2n+3+\delta}.
\end{split}
\end{equation}
With the above preparations, one can show that the corresponding scattering amplitude corresponding to the cloaking device (\ref{eq:phy1}) in $\mathbb{R}^3$ satisfies
\begin{equation}\label{eq:111}
|\mathcal{A}(\hat{x},d)|\leq C\rho^3,
\end{equation}
where $C$ remains uniform as $\rho\rightarrow 0^+$. That is, (\ref{eq:phy1}) gives a near-cloaking device within $\rho^3$ of the ideal cloaking. Furthermore, by comparing the scattered wave field $u^s$ in (\ref{eq:3d}) to that of a 3D sound-hard ball $B_\rho$ (cf. \cite{LiuIMA}), together with (\ref{coef:diff}), one can show that the deviation between them is within $\rho^{3+\delta}$ in $\mathbb{R}^3\backslash\bar{B}_{\epsilon_0}$ for any fixed $\epsilon_0>\rho$, and within $\rho^{2+\delta}$ in $\mathbb{R}^3\backslash\bar{B}_\rho$. Hence again, we come to the conclusion that the construction (\ref{eq:phy1}) is a finite realization of the sound-hard lining construction (\ref{eq:partial cloaking device}).

In summary, we have shown in this section that
\begin{thm}\label{thm:2}
Let $D_\rho=B_\rho$ and $\{D_\rho\backslash\bar{D}_{\rho/2};\sigma_l, q_l\}$ be given by (\ref{eq:lossy para}), and $\{D_{\rho/2};$ $\sigma_a,q_a\}$ be arbitrary but uniform. Then the construction (\ref{eq:phy1}) produces a near-cloaking device within $\rho^N$ of ideal cloaking. Furthermore, (\ref{eq:phy1}) is a finite realization of (\ref{eq:partial cloaking device}) in the sense that the scattered wave fields corresponding to (\ref{eq:phy1}) and (\ref{eq:partial cloaking device}), respectively, deviate within $\rho^{N+\delta}$ outside the cloaking device.
\end{thm}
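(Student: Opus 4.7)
The plan is to reduce the problem to the virtual space via transformation acoustics, just as in the proof of Theorem~\ref{thm:main}, so that the scattering amplitude in the physical setup (\ref{eq:phy1}) coincides with that of the system (\ref{eq:virt1}) posed on the unperturbed background. Exploiting the spherical geometry ($D_\rho=B_\rho$) and the uniformity of $\sigma_a,q_a$, I would then separate variables in polar coordinates (in 2D) or in spherical harmonics (in 3D), writing $u_0, u_2, u_{int}$ as in (\ref{eq:series}) and (\ref{eq:3d}). The transmission conditions (\ref{boundary:big})--(\ref{boundary:small}) (or their 3D analogues (\ref{boudary3d:big})--(\ref{boudary3d:small})) then decouple across modes into a $4\times 4$ linear system per mode, which I would solve in two stages: first use the inner interface $\partial D_{\rho/2}$ to express $b_n$ as $\Upsilon_0 a_n$ (with two subcases according to whether $J_n(k_2\rho/2)=0$), then use the outer interface $\partial D_\rho$ to obtain a closed-form expression for the scattering coefficient $d_n$ (resp.\ $d_n^m$) in terms of the single quantity $\mathcal{H}(\sigma_l,\rho)$ defined in (\ref{equation:h}).

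The heart of the argument is then the asymptotic analysis of $\mathcal{H}(\sigma_l,\rho)$ as $\rho\to 0^+$. The key observation, forced by the choice (\ref{eq:lossy para}) with $\sigma_l=C\rho^{2+2\delta}$ and $q_l=a+ib$ with $b>0$, is that $\tilde{k}\rho$ satisfies $\Re(\tilde k\rho), \Im(\tilde k\rho)\to\infty$, so the large-argument asymptotics (\ref{eq:a1}) apply and $J_n(\tilde k\rho)$ grows like $e^{\Im(\tilde k\rho)}$ while $H_n^{(1)}(\tilde k\rho)$ decays like $e^{-\Im(\tilde k\rho)}$. I would combine this with the recurrence (\ref{eq:r1}) to evaluate $\tfrac{1}{C_0}J_n'(\tilde k\rho)/J_n(\tilde k\rho)\sim -e^{i\pi/2}(\alpha+i\beta)\sqrt{\sigma_l}$ and to show that the correction term $\Upsilon_0 H_n^{(1)}(\tilde k\rho)/J_n(\tilde k\rho)$ (together with its $H_n^{(1)}{}'$ counterpart) decays faster than any polynomial in $\rho$, because the ratio $J_n(\tilde k\rho/2)/J_n(\tilde k\rho)$ already supplies a factor like $e^{-(\beta/2)k\rho^{-\delta}}$. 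This is the main obstacle: one must carry the asymptotics with enough uniformity in the mode index $n$ to prevent the tails of the series (\ref{eq:tem3}) from spoiling the estimate, and one must check both subcases of the formula for $\Upsilon_0$.

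With the asymptotic $\mathcal{H}(\sigma_l,\rho)\sim -e^{i\pi/2}(\alpha+i\beta)\rho^{1+\delta}$ in hand, substituting into the formulas (\ref{equation:dn}) and (\ref{eq:dnm}) and applying the small-argument expansions of $J_n, H_n^{(1)}$ (resp.\ $j_n, h_n^{(1)}$) yields $d_n=\mathcal{O}(\rho^{2n})$ with $d_0=\mathcal{O}(\rho^2)$ in 2D, and $d_n^m=\mathcal{O}(\rho^{2n+1})$ with $d_0^0=\mathcal{O}(\rho^3)$ in 3D. Summing the resulting Hankel-function series in the far field, using the Wronskian-type bounds on $J_n'/H_n^{(1)'}$ to control convergence, gives the bound $|\mathcal{A}(\hat x,d)|\le C\rho^N$ claimed in the first part of the theorem.

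For the finite realization statement, I would compare $d_n$ (resp.\ $d_n^m$) term-by-term with the corresponding Fourier coefficient $-i^n J_n'(k\rho)/H_n^{(1)'}(k\rho)$ of the scattered field $\tilde u^s$ produced by a genuine sound-hard ball $B_\rho$, as written in (\ref{eq:sh}). The refined asymptotic $\mathcal{H}(\sigma_l,\rho)= -e^{i\pi/2}(\alpha+i\beta)\rho^{1+\delta}(1+o(1))$ gives the extra factor $\rho^\delta$ beyond the leading behavior, yielding the per-mode bounds (\ref{eq:temp1}) and (\ref{coef:diff}). Summing these over $n$ against $H_n^{(1)}(k|x|)$ on $\mathbb{R}^N\setminus\bar B_{\epsilon_0}$ for fixed $\epsilon_0>\rho$ produces the deviation $\rho^{N+\delta}$ outside the cloaking device, while on $\mathbb{R}^N\setminus\bar D_\rho$ one loses one power of $\rho$ because of the slower decay of $H_n^{(1)}(k|x|)$ near $\partial D_\rho$, giving the weaker $\rho^{N-1+\delta}$ bound mentioned after (\ref{eq:sh2}). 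This finishes both assertions of the theorem.
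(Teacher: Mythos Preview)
Your proposal is correct and follows essentially the same route as the paper: reduction to the virtual space, separation of variables in (\ref{eq:series})/(\ref{eq:3d}), solving the transmission systems to isolate $\mathcal{H}(\sigma_l,\rho)$, the exponential asymptotics (\ref{asym:special}) driven by $\Im(\tilde k\rho)\to\infty$, and the term-by-term comparison with the sound-hard coefficients. Your remark about needing uniformity in the mode index $n$ is well taken---the paper handles this only implicitly through the factorial decay in (\ref{eq:temp1}) and (\ref{coef:diff})---but the overall strategy is identical.
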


As we remarked earlier that Theorem~\ref{thm:2} equally holds for the general case with general geometry and arbitrary cloaked contents (see \cite{LiLiuSun}). Actually, in the subsequent section, our numerical examples are presented for variable cloaked contents.

\section{Some discussion on different cloaking schemes}

As we discussed earlier in Section~\ref{sec:intro}, two different
cloaking schemes were developed in \cite{KOVW} and \cite{Liu}. One
of the key ingredients is to introduce a special layer between the
cloaked region and the cloaking region, which are respectively
$D_{1/2}$ and $\Omega\backslash\bar{D}$ in the physical space
described in (\ref{eq:partial cloaking device}). They correspond,
respectively, $D_{\rho/2}$ and $\Omega\backslash\bar{D}_\rho$ in the
virtual space. In \cite{KOVW}, the authors introduce a lossy layer
occupying $D\backslash\bar{D}_{1/2}$ and they showed that the lossy
layer is indispensable to achieve successful near-cloak. In virtual
space, the lossy layer in \cite{KOVW} is given as
\begin{equation}\label{eq:finite}
\left\{D_{\rho}\backslash\bar{D}_{\rho/2}; I, 1+i\beta\right\},\ \ \ \beta\sim \rho^{-2}.
\end{equation}
If one lets the loss parameter $\beta$ go to infinity, the limit corresponds to the imposition of a homogeneous Dirichlet boundary condition, which is the one considered in \cite{Liu}. It is interesting to mention that the improvement of cloaking performance by imposing specific boundary condition on the cloaking interface is also considered in \cite{GKLU0}. In this context, imposition of a homogeneous Dirichlet boundary condition amounts to the lining of a layer of sound-soft material in
$D_{\rho}\backslash\bar{D}_{\rho/2}$. In this sense, the lossy layer (\ref{eq:finite}) is a finite realization
of the sound-soft layer lining. We shall refer the construction in \cite{Liu} with a sound-soft layer as {\it SS construction}, and the one in \cite{KOVW} with a finite realization of sound-soft layer as {\it FSS construction}.

In our cloaking construction (\ref{eq:partial cloaking device}), the inclusion of a sound-hard layer will significantly enhance the cloaking performance, and as specified in the Introduction we call this scheme an SH construction. In order to achieve a finite realization of the sound-hard layer, we utilize the following lossy layer
\begin{equation}\label{eq:finite 2}
\left\{D_{\rho}\backslash\bar{D}_{\rho/2}; \sigma_l, a+ib\right\},\ \ \ \sigma_l\sim\rho^{2+\delta}I,\ \ a\sim 1,\ b\sim 1.
\end{equation}
Clearly, the lossy layer (\ref{eq:finite 2}) is of significant physical and mathematical nature from (\ref{eq:finite}), and it could produce significantly improved near-cloaking performances. This losses scheme is called an FSH construction.

\section{Numerical examples}

In this section, we present some numerical experiments to
demonstrate the theoretical results established in the previous
sections. There are totally four near-cloak schemes investigated,
namely \emph{SS} (sound-soft lining), \emph{FSS} (finite sound-soft
lining), \emph{SH} (sound-hard lining) and \emph{FSH} (finite
sound-hard lining).

First, all the numerical experiments are conducted in $\mathbb{R}^2$ and
we choose $D$ and $D_\rho$ as $B_{R_1}$ and $B_\rho$,
respectively. Some key parameters are set as follows: $R_1=2$,
$R_2=3$ and $R_3=4$, thus $R_0=R_1/2=1$. Experimental settings are
shown in Figure~\ref{fig:setting}. For the setting with respect to
the scattering measurement, a Cartesian PML layer of width $1$ is
attached to the square $(-5,5)^2$ to truncate the whole space into a
finite domain with scattering boundary conditions enforced on the
outer boundary.
%For near-cloaking with respect to
%boundary measurements, we show the experimental setting on the left
%in Figure~\ref{fig:setting}.
$(-5,5)^2\setminus \overline{B_{R_3}}$ and
$B_{R_3}\setminus\overline{B_{R_2}}$  are homogeneous surrounding
media. $B_{R_2}\setminus \overline{B_{R_1}}$ is the cloaking medium.
$B_{R_1}\setminus \overline{B_{R_0}}$, $B_{R_0}$ are the lossy layer
and the cloked region, respectively. The regularization parameter
$\rho$ ranges from $0.5$ to $10^{-5}$. We fix the wave number $k=2$,
and the incident direction $d=(1,0)^T$.

In addition, for Scheme \emph{FSS}, the lossy parameters are chosen
according to \cite{KOVW}, i.e., $\sigma_l = I$, $q_l = \rho^2 (1+2.5
\rho^{-2} i  )$ in $B_{R_2}\setminus \overline{B_{R_1}}$. For Scheme
\emph{FSH}, the lossy parameters are chosen by \eqref{eq:phy1},
\eqref{eq:lossy para} and the definition of the map
\eqref{eq:F:ball:map}, namely $\sigma_l = \rho^{2.5}I$, $q_l =
\rho^2 (3+2 i)$ in the cloaking medium of the physical space, namely
$C=1$, $\delta=0.5$, $a=3$ and $b=2$.

For Schemes \emph{FSS} and \emph{FSH}, We choose the medium
coefficients in the cloaked region
to be variable functions $\sigma_a = 1+0.3\ x^2 y^2$, $q_a=5+x$. For Scheme \emph{SH}, the
incident wave is shifted to the right by a tenth of the wave length
such that the origin is not in the valley nor peak and the incident
wave does not vanishes at the origin.

\begin{figure}
\hfill{}
\includegraphics[width=0.45\textwidth]{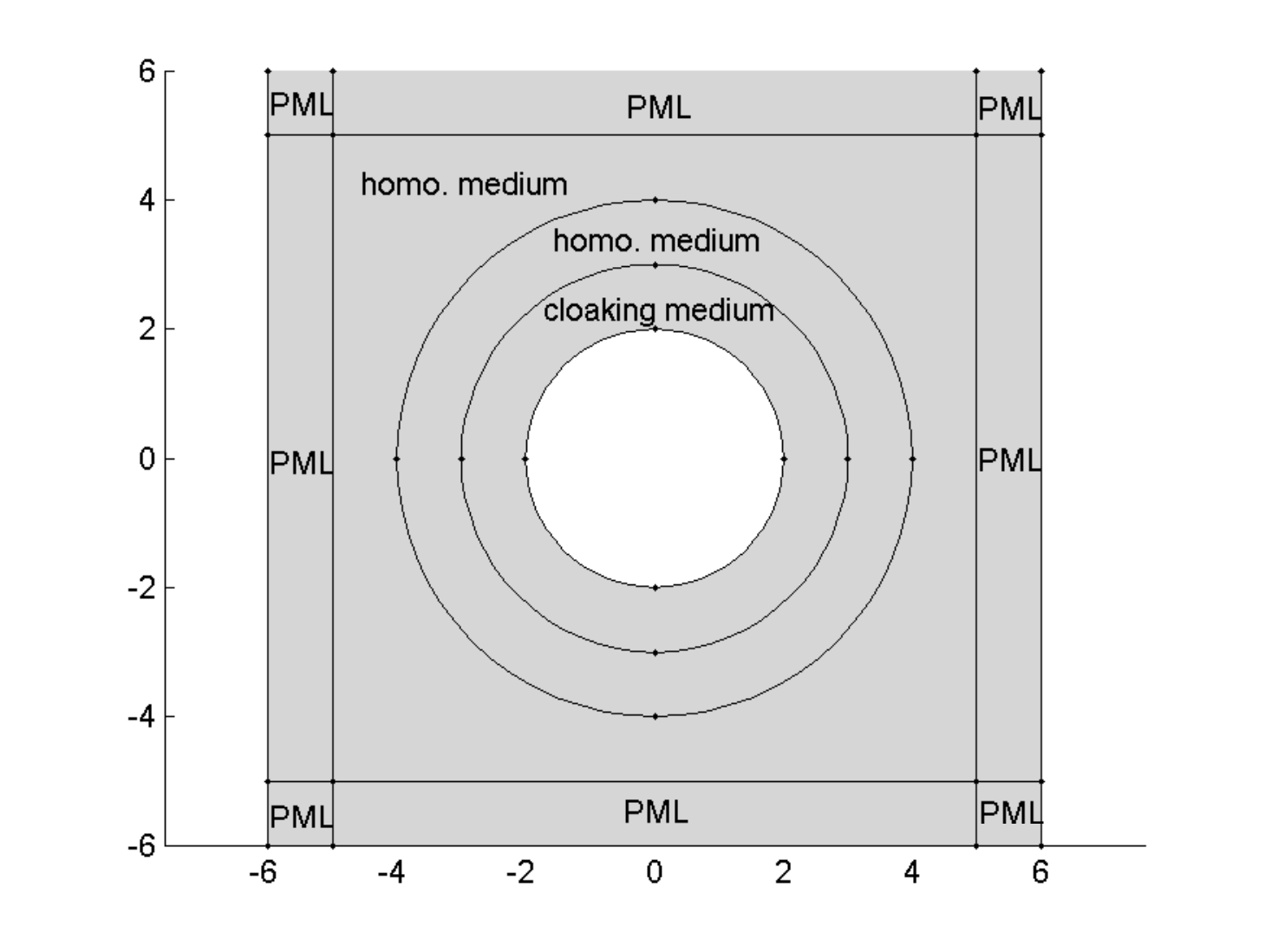}\hfill{}
%
%\hfill{}
%\includegraphics[clip,width=0.45\textwidth]{./fig/Scheme2_Boundary_measurement_setting}\hfill{}
\includegraphics[width=0.45\textwidth]{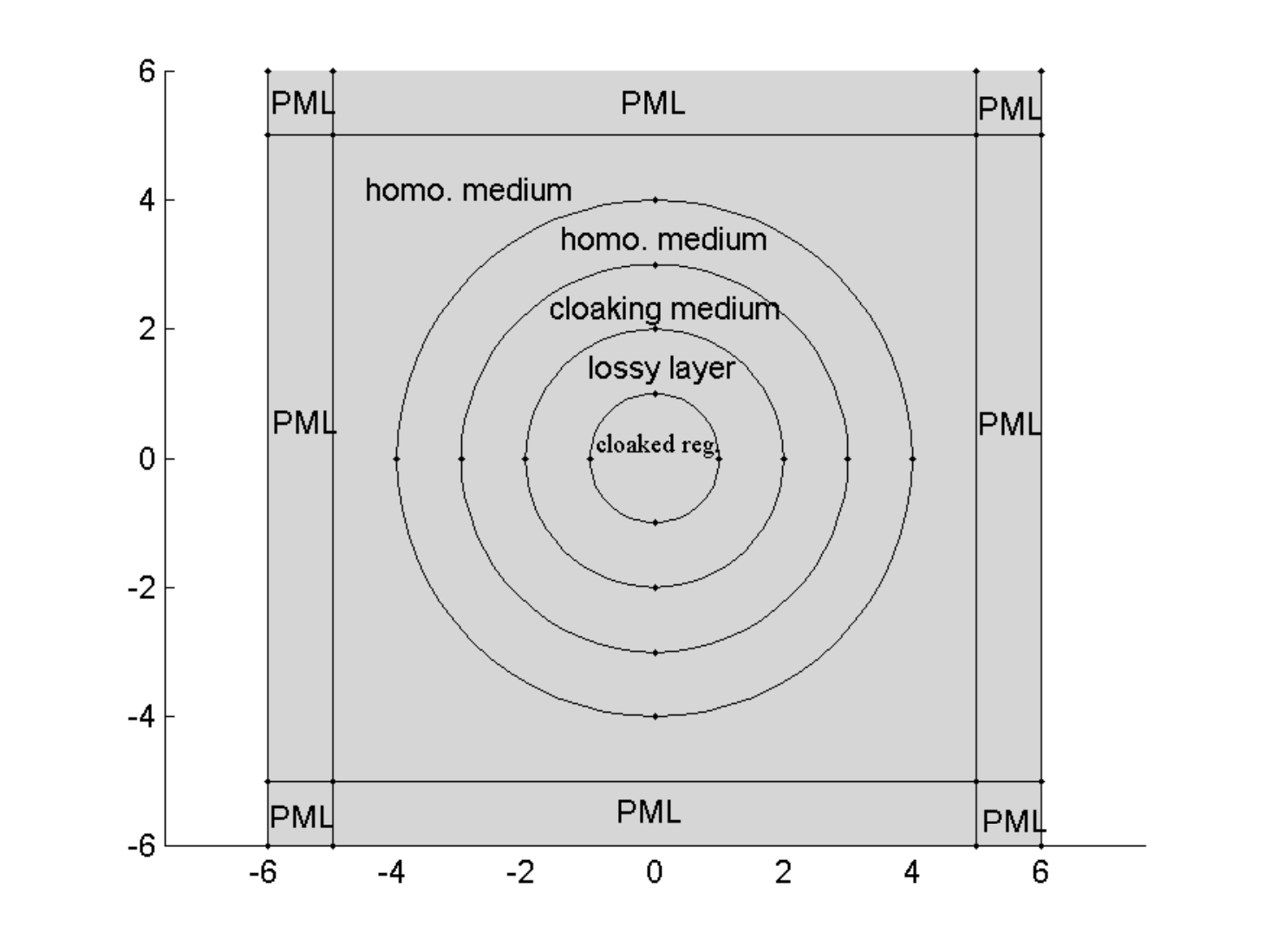}\hfill{}

\caption{\label{fig:setting} Experimental settings for scattering
measurement tests.  Left: Tests with \emph{SS} and \emph{SH} lining. Right: Tests with \emph{FSS} and \emph{FSH} lining.}
\end{figure}

From the transformation map $F$ defined in \eqref{eq:F:ball:map}, we
can determine the cloaking medium coefficients by \eqref{eq:phy1}
for any $\rho<R_1$.
%Please refer to Figure~\ref{fig:medium:coefficient} for $\rho = 1$.
It is emphasized that as $\rho$ tends to zero, the singularity of
$\sigma$ increases very fast. %From Figure~\ref{fig:rho:sigma}, we
For example, it is observed that $\sigma_{11}$ grows asymptotically
as $1 / \rho$ and exhibits a thin layer of large values except some
constant background as $\rho$ decreases, which requires many local
refinements for better resolution and accounts for the boundary
layer in the finite element solutions.

%
%
%\begin{figure}
%\hfill{}
%\includegraphics[clip,width=0.45\textwidth]{./fig/sigma11}\hfill{}
%\includegraphics[width=0.45\textwidth]{./fig/sigma12}\hfill{}
%
%\hfill{}
%\includegraphics[clip,width=0.45\textwidth]{./fig/sigma22}\hfill{}
%\includegraphics[width=0.45\textwidth]{./fig/q}\hfill{}
%
%\caption{\label{fig:medium:coefficient} Cloaking medium coefficients
%when $\rho=1$. Up left: $\sigma_{11}$. Up right: $\sigma_{12}$ or
%$\sigma_{21}$. Bottom left: $\sigma_{22}$. Bottom right: $q$.}
%\end{figure}
%
%
%\begin{figure}
%\hfill{}
%\includegraphics[clip,width=0.45\textwidth]{./fig/sigma11_rho_1_2d}\hfill{}
%\includegraphics[width=0.45\textwidth]{./fig/sigma11_rho_2_2d}\hfill{}
%
%
%\caption{\label{fig:rho:sigma} Cloaking medium coefficient
%$\sigma_{11}$ for $\rho=10^{-1}$ (left) and $10^{-2}$ (right),
%respectively.}
%\end{figure}

%
%The boundary measurement data (we take NtD maps as an instance by
%injecting outward normal derivative of the exact solution $e^{i k x
%\cdot d}$ and obtaining Dirichlet data) are generated by solving the
%associated Helmholtz system with isoparametric quadratic finite
%element methods. The linear system after discretization is solved
%with a family of increasingly finer meshes over the computational
%domain until the relative error is small, e.g., less than $10^{-3}$.
%

The scattering amplitude, or the far field data, can be computed in
the following way (cf. \cite{ColKre}). We can represent the
scattered wave in the boundary integral form
\begin{equation}\label{eq:us}
u^{s}(x)=\int_{\partial
B_{R_3}}u^{s}(y)\frac{\partial\Phi(x,y)}{\partial\nu(y)}-\Phi(x,y)\frac{\partial
u^{s}(y)}{\partial\nu(y)}\, ds(y),\quad
x\in\mathbb{R}^{2}\setminus\overline{B_{R_3}}.
\end{equation}
Making use of the asymptotic expansion of the fundamental solution
$\Phi(x,y)=i H_0^{(1)}(k|x-y|)/4$, we then have
\begin{equation}\label{eq:uinf}
\mathcal{A}(\hat{x}):=\frac{\exp(-i
\pi/4)}{\sqrt{8k\pi}}\int_{\partial B_{R_3}}u^{s}(y)\frac{\partial
e^{-i  k\hat{x}\cdot y}}{\partial\nu(y)}-e^{-\imath k\hat{x}\cdot
y}\frac{\partial u^{s}(y)}{\partial\nu(y)}\, ds(y),
\end{equation}
where $\hat{x}=x/|x|$ is the observation direction and $\nu(y)$ is
the outward unit normal vector pointing to the infinity. That is,
the far field data can be approximated by the numerical quadrature
of \eqref{eq:uinf} using the Cauchy data of the scattered wave $u^s$
on the boundary of $B_3$.

Two groups of tests are carried out in the following.

%%%%%%%%%%%%%%%%%%%%%%%%%%%%%%%%%%%%%%%%%%%%%%%%%%%%%%%%%%%%%%%%%%%%%%%%%%%%%%%%%%%%%%%%%%

\subsection{Near-cloaks of scheme \emph{SS} and scheme \emph{FSS}}

It is pointed out that as $\rho$ tends to zero, the medium
coefficients exhibit more and more singularity close to $\partial
B_{R_1}$. Thus finite element solutions require more degrees of
freedom to resolve the boundary layer close to $\partial B_{R_1}$.
Either the sound-soft boundary condition or the \emph{FSS} lossy
layer enforces a strong influence to the local behavior of the
transmitted wave close to $\partial B_{R_1}$, which amounts to
introducing a point source in the virtual space. Such \emph{virtual
point source} makes the boundary data show a very slow convergence
as we decrease $\rho$. To tackle with the difficulty of the boundary
layer  amounts to more than two millions of DOF's to achieve the
desired relative error tolerance when $\rho=10^{-5}$. The same
observation holds for the other schemes. To avoid the influence of the
finite element discretization error, all the following tests are
carried out on the finest mesh to show the sharpness of the
theoretical error bounds in terms of $\rho$.

%The scattering measurement data depend on one variable: the
%observation angle $\theta\in \mathbb{S}^1$. We compute the
%scattering measurement data at 100 equidistantly distributed
%observation angles $\theta_{j}=2j\pi/100-\pi$, $j=1,2,\ldots,100$.

The scattered wave and the  transmitted wave are plotted for Schemes
\emph{ SS} and \emph{FSS}, respectively,  for different $\rho$'s in
Figures~\ref{fig:S:us:scheme:12}.

%\begin{figure}
%%\hfill{}
%%\includegraphics[clip,width=0.45\textwidth]{./fig/Scheme1_Boundary_measurement_convergence_test}\hfill{}
%%\includegraphics[width=0.45\textwidth]{./fig/Scheme2_Scattering_measurement_convergence_test}\hfill{}
%
%\caption{\label{fig:S:us} Convergence history of the transformed
%scattered wave $u^s$ (real part) with respect to $\rho=10^-1, -2,
%-4, -7$ from top to bottom, respectively. Left: \emph{Scheme I}.
%Right: \emph{Scheme II}.}
%\end{figure}

\begin{figure}[htbp]
%\hfill{}
%\includegraphics[clip,width=0.45\textwidth]{./fig/Scheme1_Boundary_measurement_convergence_test}\hfill{}
%\includegraphics[width=0.45\textwidth]{./fig/Scheme2_Scattering_measurement_convergence_test}\hfill{}

%\hfill{}
%\includegraphics[clip,width=0.32\textwidth]{./fig/Scheme1_R0_half_scattering}\hfill{}
%\includegraphics[width=0.32\textwidth]{./fig/Scheme2_R0_half_scattering}\hfill{}
%

\hfill{}
\includegraphics[clip,width=0.32\textwidth]{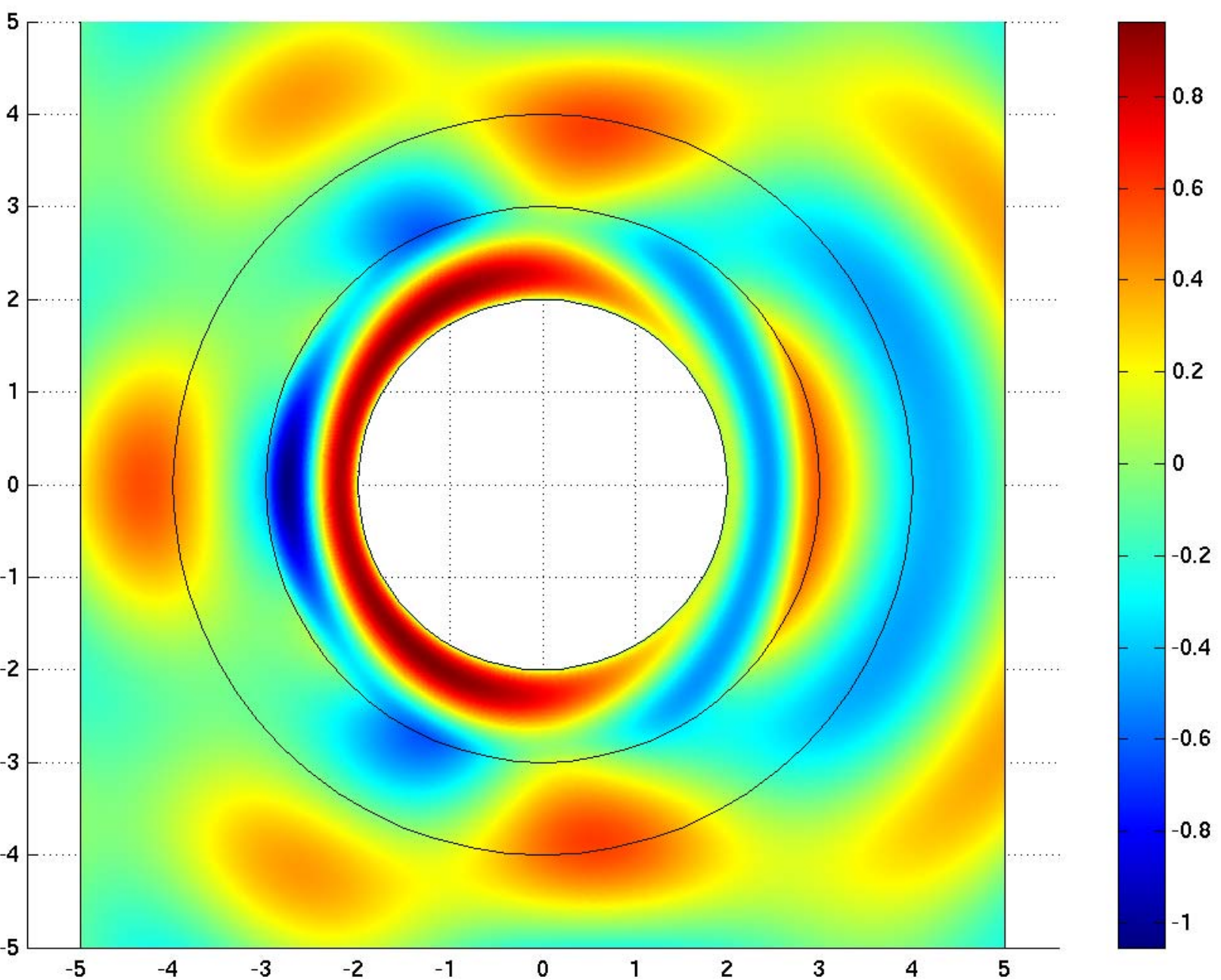}\hfill{}
\includegraphics[width=0.32\textwidth]{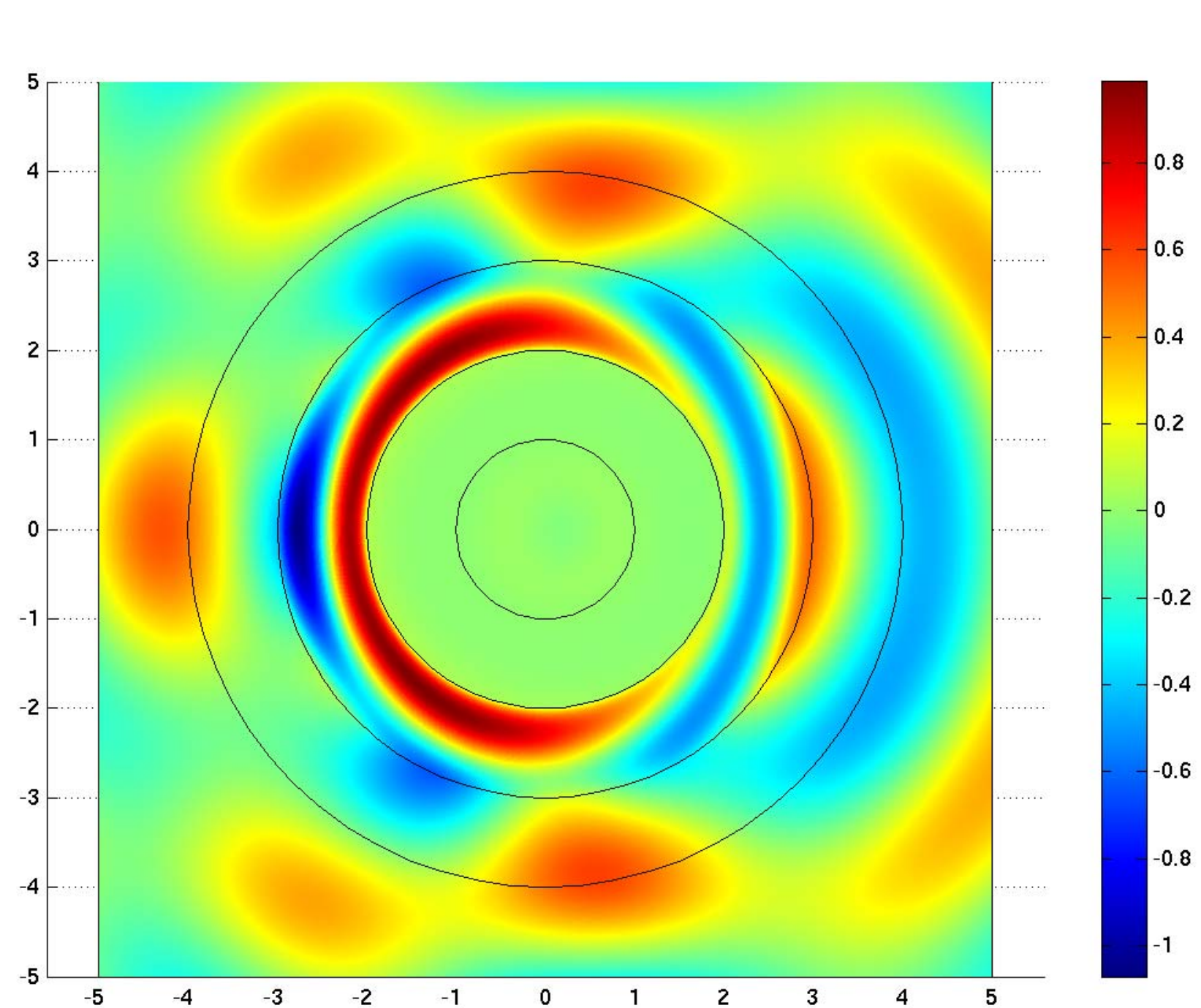}\hfill{}

\hfill{}
\includegraphics[clip,width=0.32\textwidth]{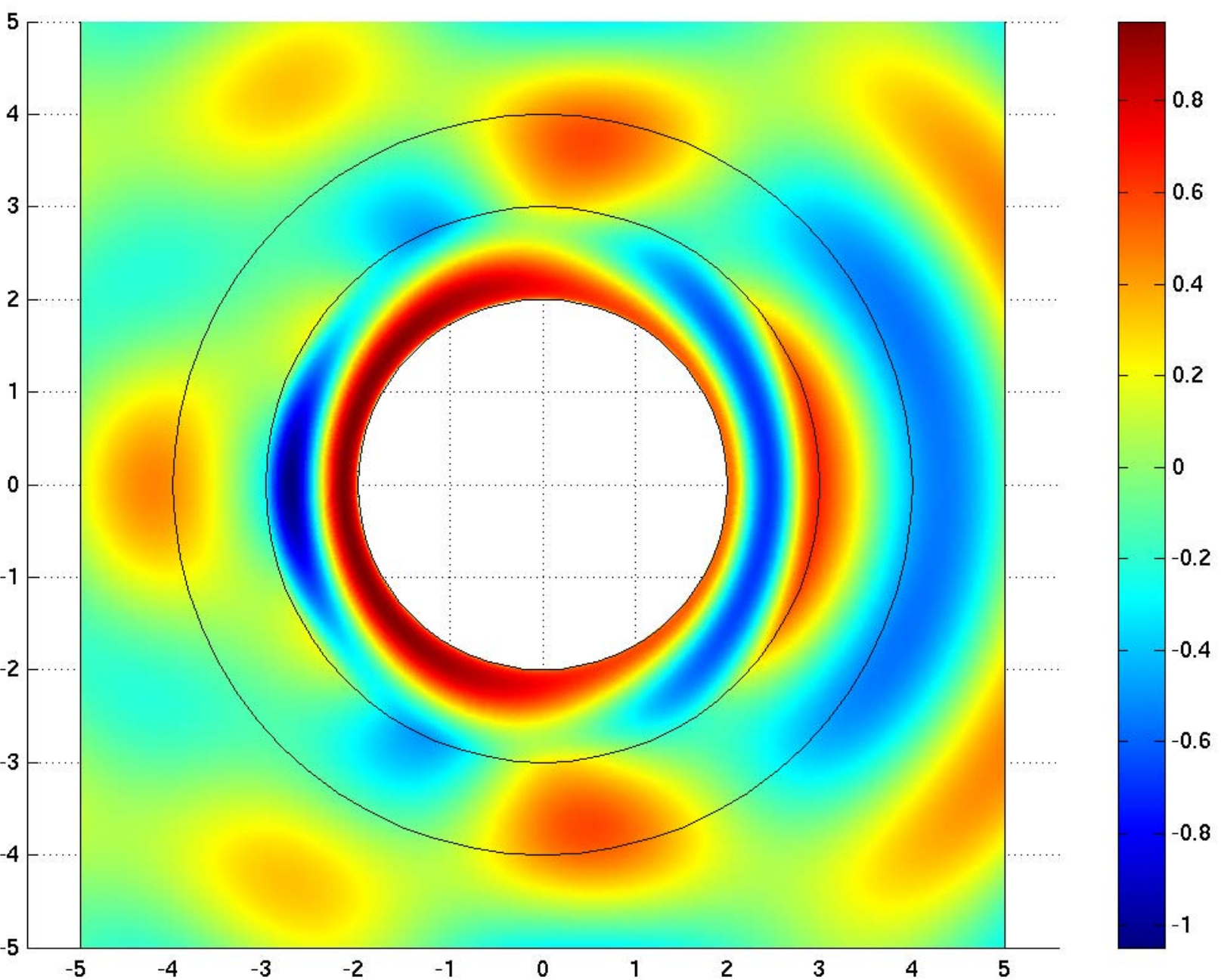}\hfill{}
\includegraphics[width=0.32\textwidth]{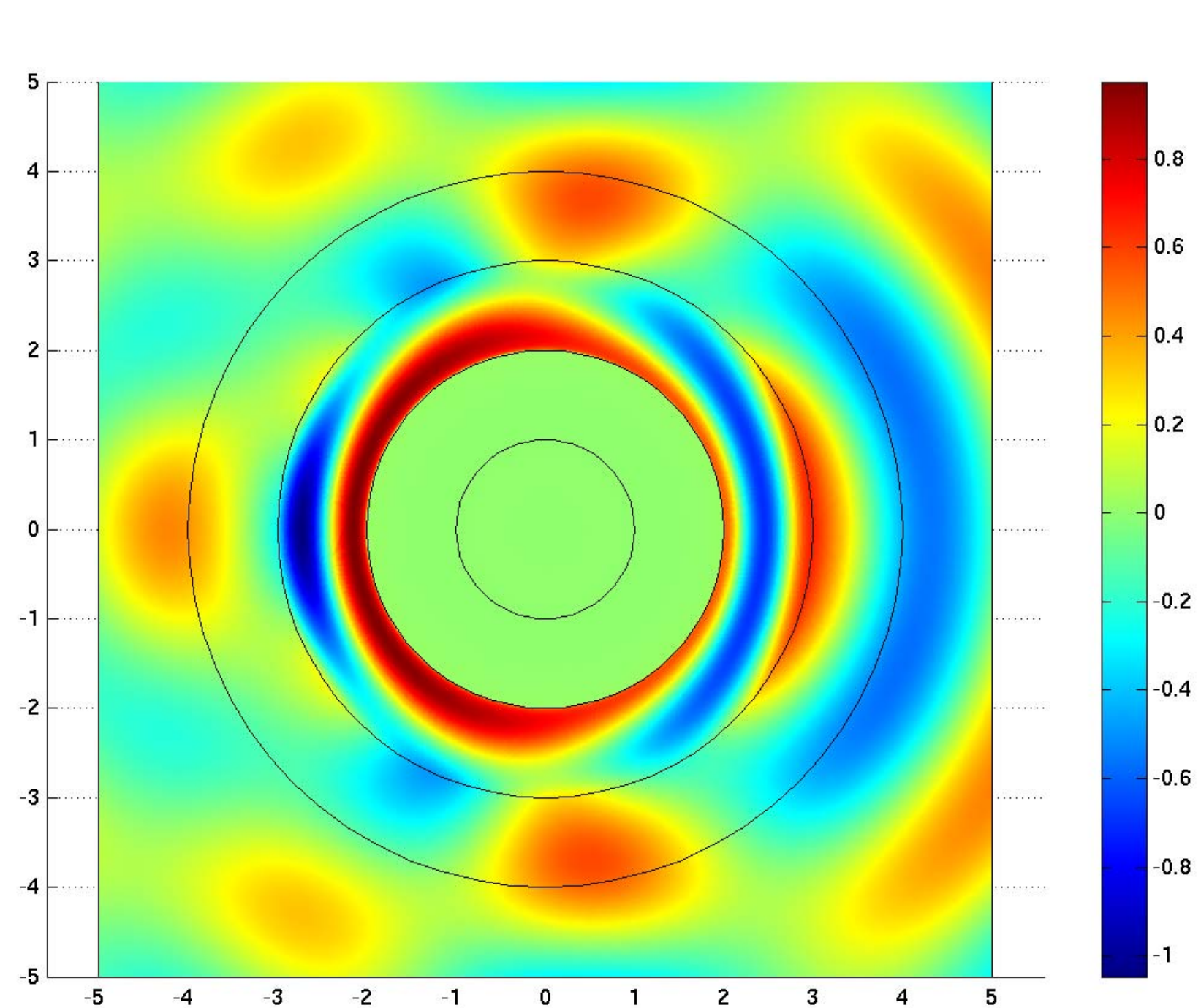}\hfill{}

\caption{\label{fig:S:us:scheme:12} The scattered wave
 and the transmitted wave (real part) with respect to $\rho=10^{-1}$ and
$10^{-5}$ from top to bottom, respectively. Left: Scheme \emph{ SS};
Right: Scheme \emph{ FSS}.}
\end{figure}

The scattering measurement data are generated according to
\eqref{eq:uinf} by solving the Helmholtz system on the truncated
domain with a PML layer. In the discrete sense, all the norms are
equivalent. So the discrete maximum norm are used to measure the
decay rate of the far field data $\mathcal{A}(\hat{x})$, namely the maximum of
the modulus of $\mathcal{A}(\hat{x})$ are taken over 100 equidistant observation
direction on $\mathbb{S}^1$. We investigate the convergence rate by
testing $\rho=1/2^j$, $j=1,2,\ldots,7$ for Schemes \emph{SS} and
\emph{FSS}. By plotting in
Figure~\ref{fig:linear:regression:scattering:SS:FSS} the convergence
history of the discrete maximum norm of $\mathcal{A}(\hat{x})$ over
$\mathbb{S}^1$ with respect to the upper bound $1/|\log_{10} \rho|$,
we see clearly that Schemes \emph{SS} and \emph{FSS} schemes indeed
achieve near-cloak, but we see that the convergence curve
demonstrates linearity asymptotically except the first few outliers
to the left of the plots in
Figure~\ref{fig:linear:regression:scattering:SS:FSS}, which verifies
the sharpness of the upper bound established in \cite{KOVW}.  But
the convergence slows down significantly as $\rho$ decreases, which
is reflected by the clustering of the blue star points in the curve.
More importantly, it can be seen from
Figure~\ref{fig:linear:regression:scattering:SS:FSS} that the
difference between the discrete maximum norms for Schemes \emph{SS}
and \emph{FSS} gets smaller as $\rho$ decreases because the
\emph{FSS} lossy layer tends effectively to the sound-soft boundary
condition for small $\rho$ and thus Scheme \emph{FSS} approaches
Scheme \emph{SS} in the limit sense as $\rho\to 0$.

\begin{figure}
%\hfill{}
%\includegraphics[clip,width=0.45\textwidth]{./fig/Scheme1_Boundary_measurement_convergence_test}\hfill{}
%\includegraphics[width=0.45\textwidth]{./fig/Scheme2_Scattering_measurement_convergence_test}\hfill{}

\hfill{}
\includegraphics[clip,width=0.4\textwidth]{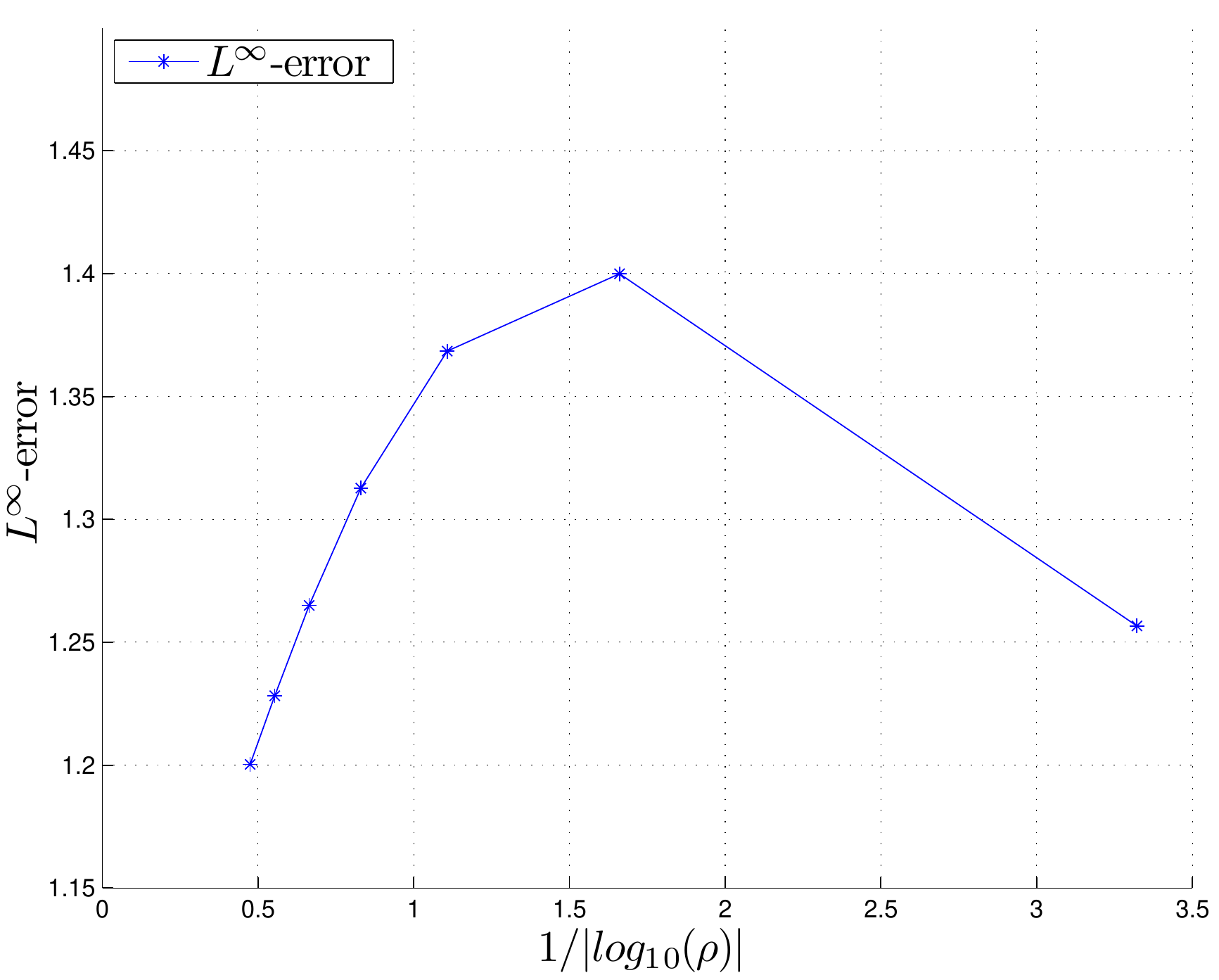}\hfill{}
\includegraphics[clip,width=0.4\textwidth]{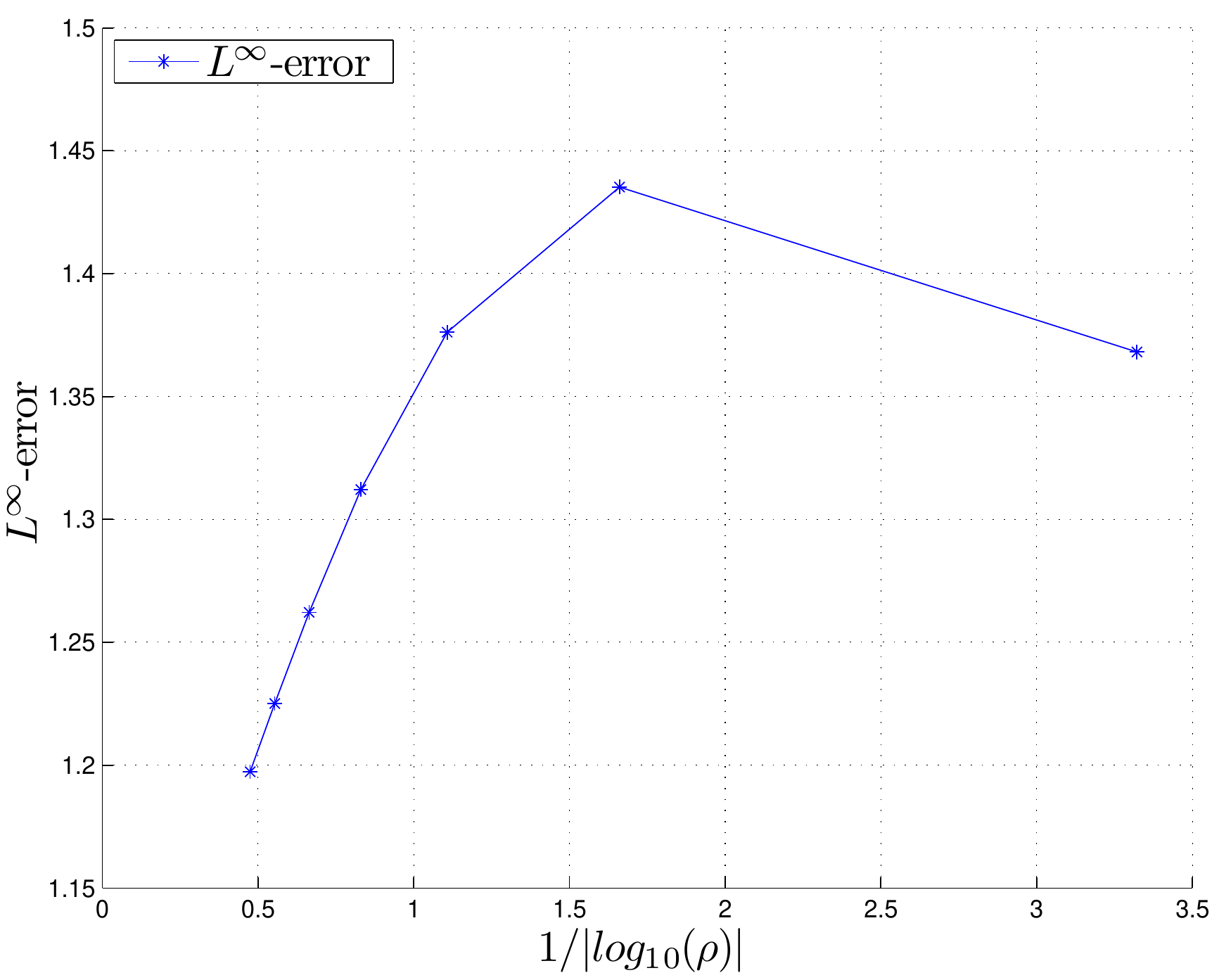}\hfill{}

%
%\hfill{}
%\includegraphics[width=0.4\textwidth]{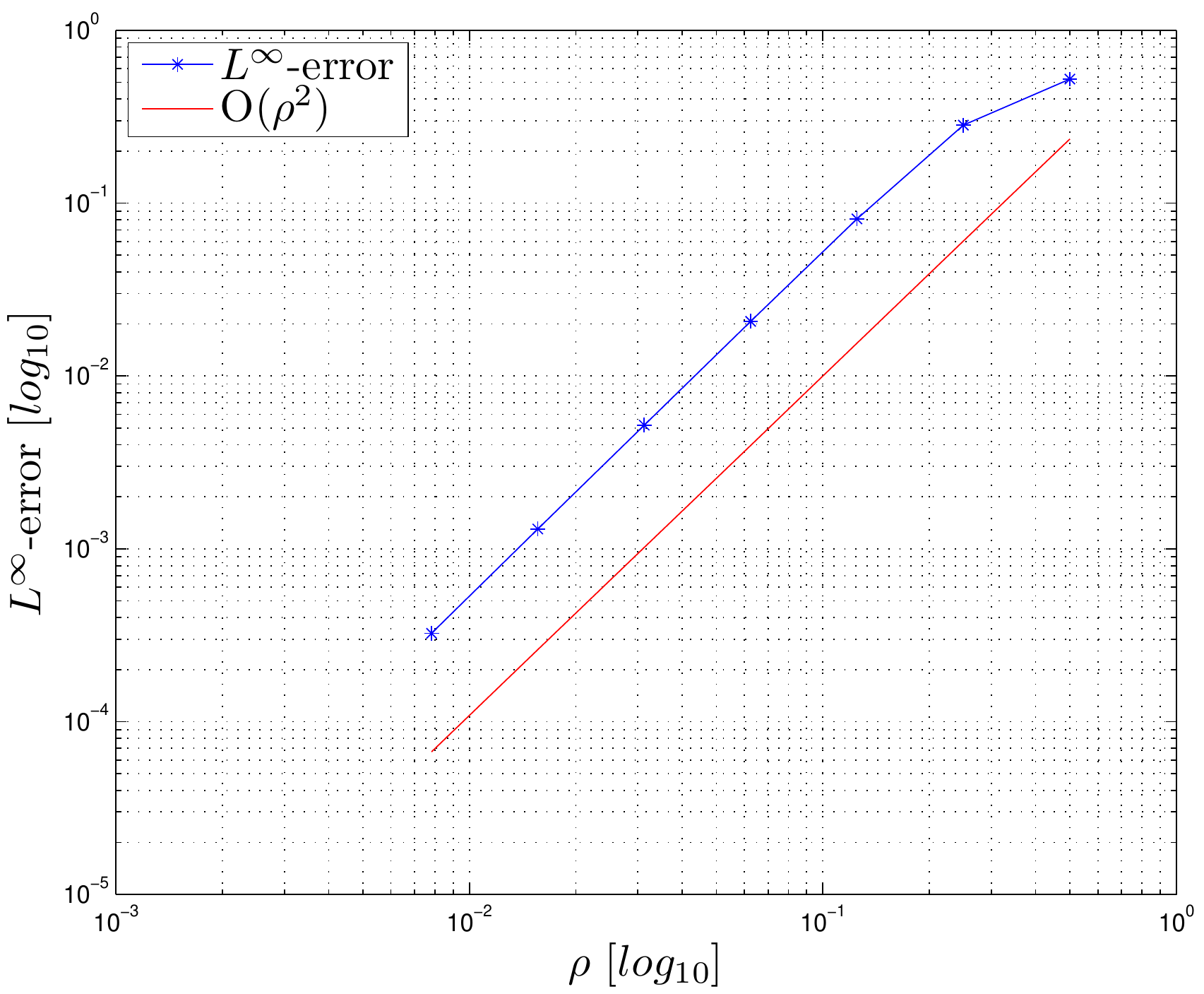}\hfill{}
%\includegraphics[width=0.4\textwidth]{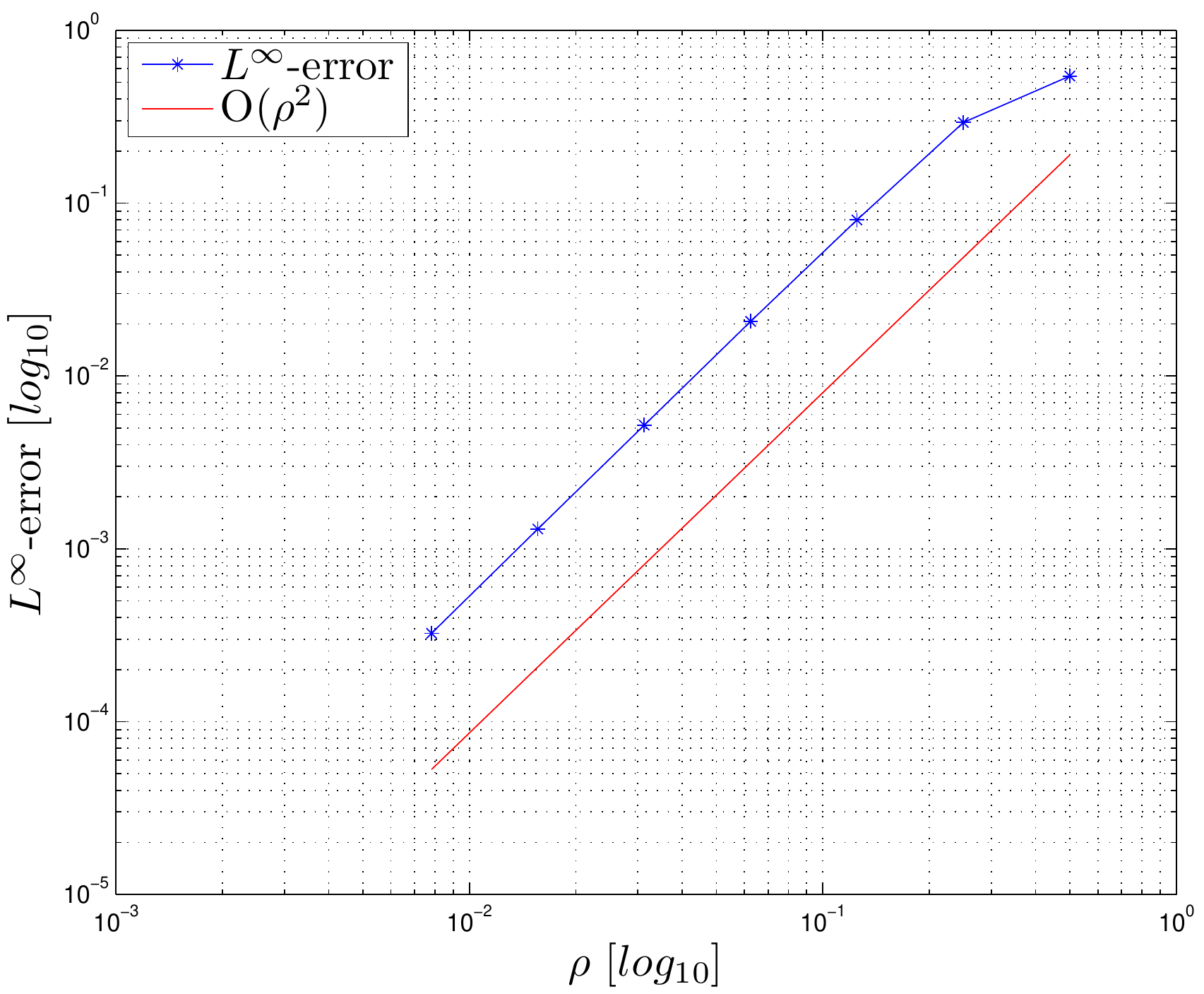}\hfill{}

% up to naked eye's discernment.

\caption{\label{fig:linear:regression:scattering:SS:FSS} Convergence
history of scattering measurement data versus $1/|\log_{10} \rho|$
for Scheme \emph{SS} (Left) and Scheme \emph{FSS} (Right). }
\end{figure}

%%%%%%%%%%%%%%%%%%%%%%%%%%%%%%%%%%%%%%%%%%%%%%%%%%%%%%%%%%%%%%%%%%%%%%%%%%%%%%%%%%%%%%%%%%%%%%%%%%%%%%%%%%%%%%%%%%%%%%%%

\subsection{Near-cloaks of scheme \emph{SH} and scheme \emph{FSH}}

Contrary to Schemes \emph{SS} and \emph{FSS}, our new schemes by
\emph{SH} and \emph{FSH} lining can achieve significantly better
near-cloak performance.

Figure~\ref{fig:S:us:scheme:3} shows the scattered wave and the
transmitted wave for {Scheme} \emph{SH} for different $\rho$'s.
Compared with Scheme  \emph{SS}, the scattered wave of Scheme
\emph{SH} decays significantly as $\rho$ decreases. Moreover, the
transmitted wave approaches more and more like a deformed plane
incident wave, the interior value of the transmitted wave tends to a
constant as $\rho\to 0$.

\begin{figure}
%\hfill{}
%\includegraphics[clip,width=0.45\textwidth]{./fig/Scheme1_Boundary_measurement_convergence_test}\hfill{}
%\includegraphics[width=0.45\textwidth]{./fig/Scheme2_Scattering_measurement_convergence_test}\hfill{}
%
%\hfill{}
%\includegraphics[width=0.32\textwidth]{./fig/Scheme3_R0_half_scattering_usc}\hfill{}
%\includegraphics[width=0.32\textwidth]{./fig/Scheme3_R0_half_scattering_uint}\hfill{}

\hfill{}
\includegraphics[width=0.32\textwidth]{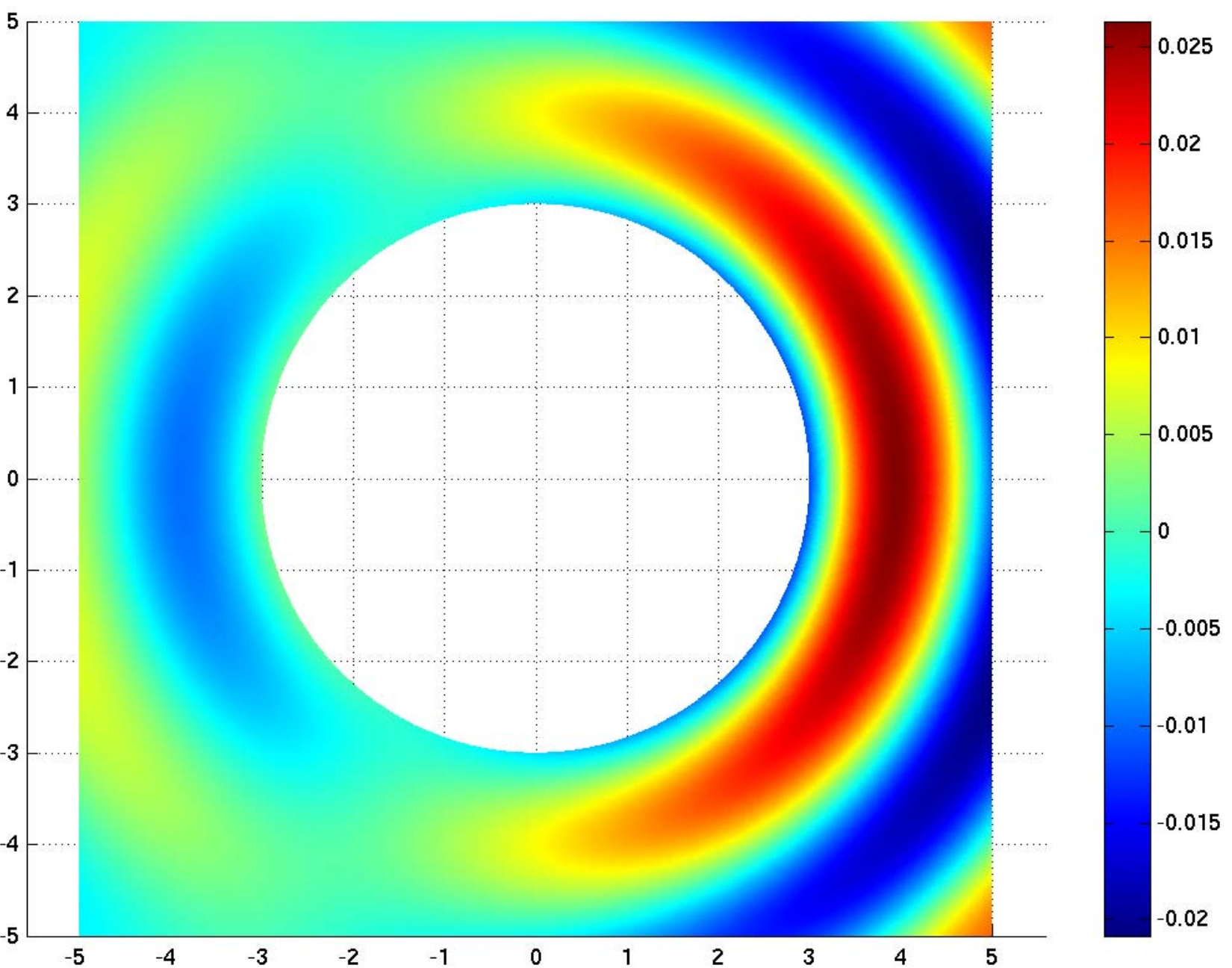}\hfill{}
\includegraphics[width=0.32\textwidth]{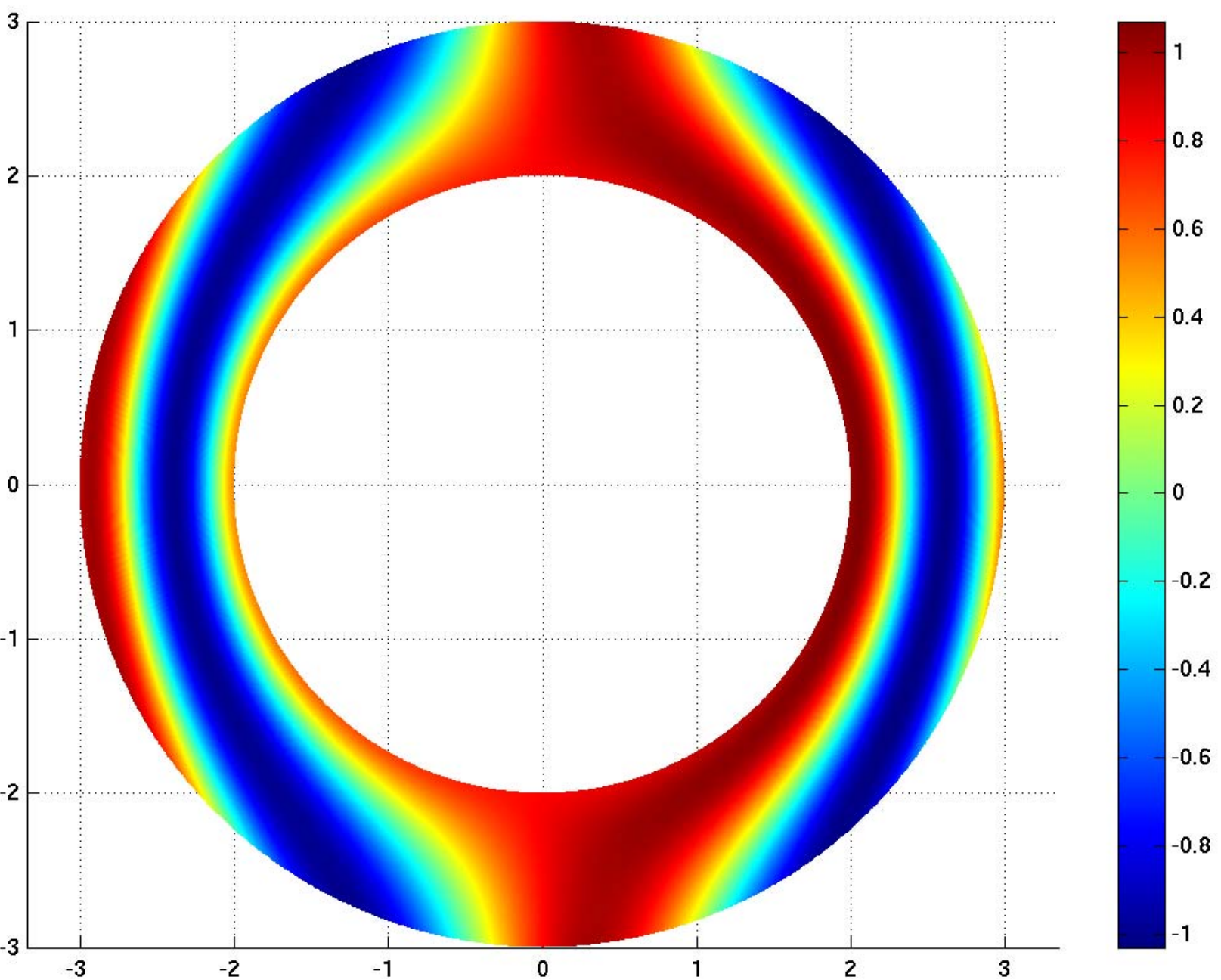}\hfill{}

\hfill{}
\includegraphics[width=0.32\textwidth]{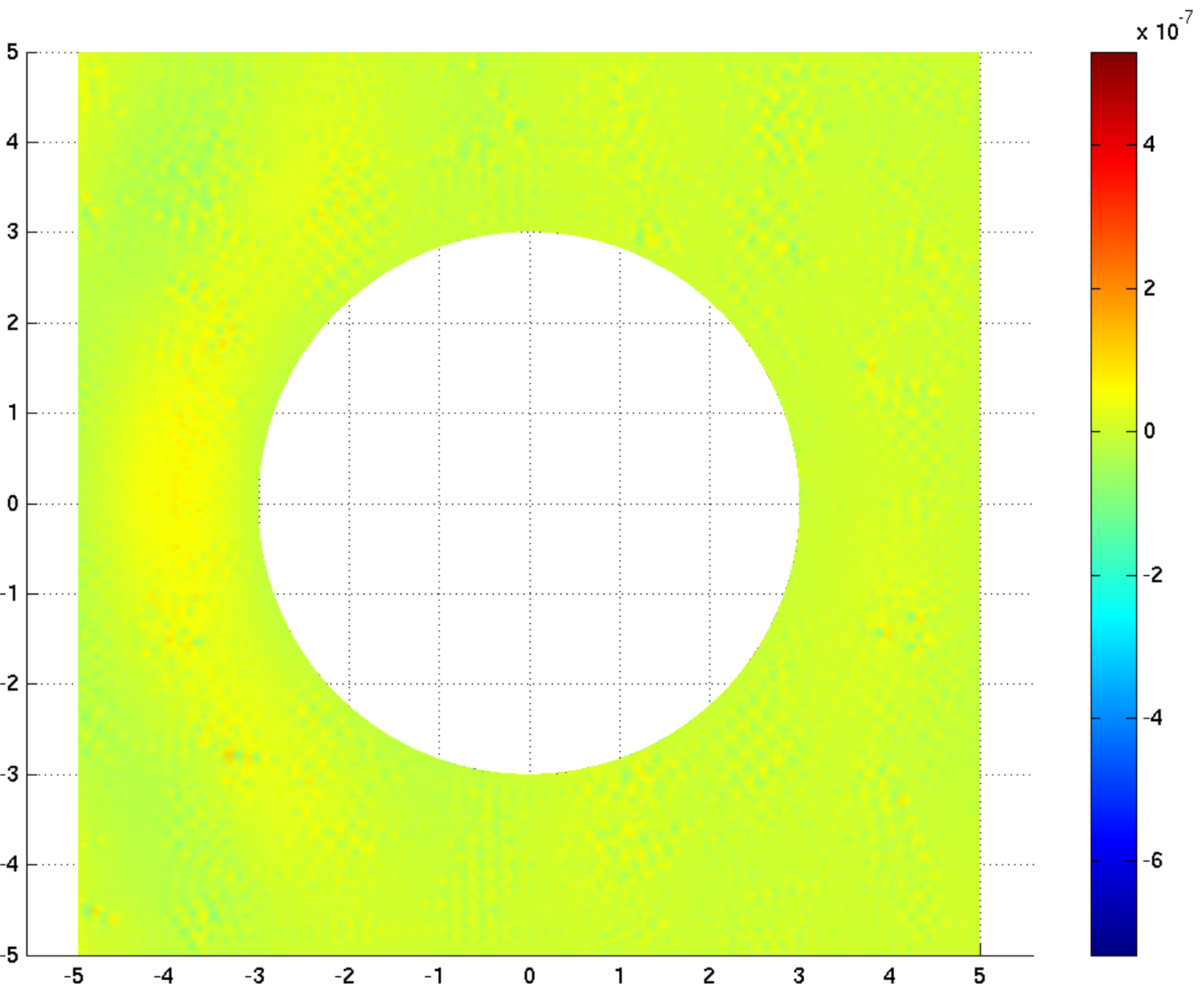}\hfill{}
\includegraphics[width=0.32\textwidth]{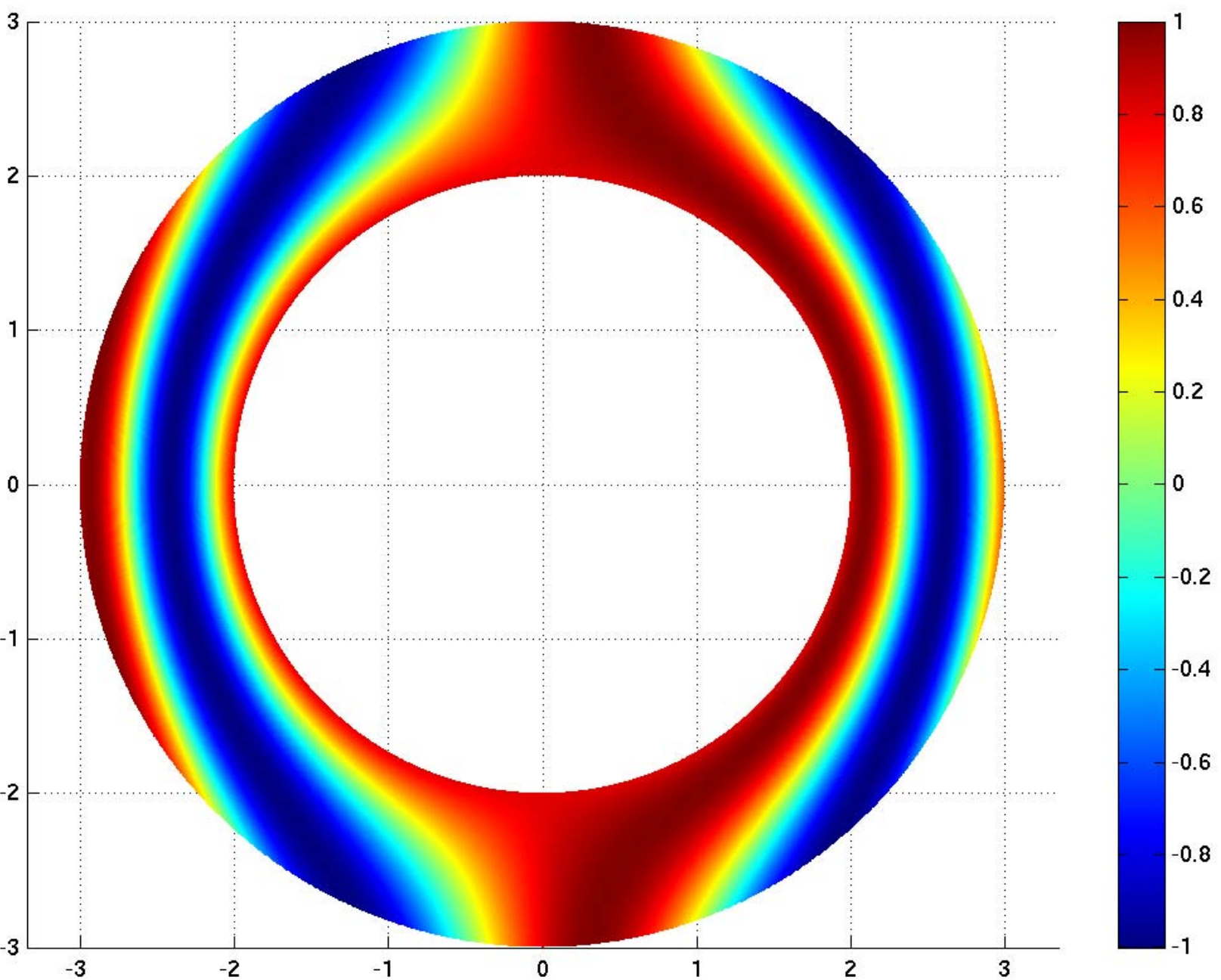}\hfill{}

\caption{\label{fig:S:us:scheme:3} The scattered wave $u^s$ (real
part) and the transmitted wave $u^t$  with respect to $\rho=10^{-1}$
and $10^{-5}$ from top to bottom, respectively, for \emph{Scheme
SH}.}
\end{figure}

We plot the scattered wave and the transmitted wave for {Scheme}
\emph{FSH} for different $\rho$'s in Figure~\ref{fig:S:us:scheme:4}.
We can observe a thin red layer near the outer boundary of the lossy
layer, on which both the acoustic potential and the normal flux
matches with those from the cloaking medium. Compared with Scheme
 \emph{FSS}, the scattered wave of Scheme  \emph{FSH} decays
significantly as $\rho$ decreases. The transmitted wave approaches
more and more like a deformed plane incident wave, and the interior
acoustic potential of the transmitted wave within the cloaked region
vanishes as $\rho\to 0$. In other words, the medium scatterer in the
cloaked region is approximately isolated from the exterior world and
has negligible affect on the scattering measurement, which means
that we achieve significant  nearly-invisibility by \emph{SH} and
\emph{FSH} constructions.

\begin{figure}
%\hfill{}
%\includegraphics[clip,width=0.45\textwidth]{./fig/Scheme1_Boundary_measurement_convergence_test}\hfill{}
%\includegraphics[width=0.45\textwidth]{./fig/Scheme2_Scattering_measurement_convergence_test}\hfill{}
%
%\hfill{}
%\includegraphics[width=0.32\textwidth]{./fig/Scheme3_R0_half_scattering_usc}\hfill{}
%\includegraphics[width=0.32\textwidth]{./fig/Scheme3_R0_half_scattering_uint}\hfill{}

\hfill{}
\includegraphics[width=0.32\textwidth]{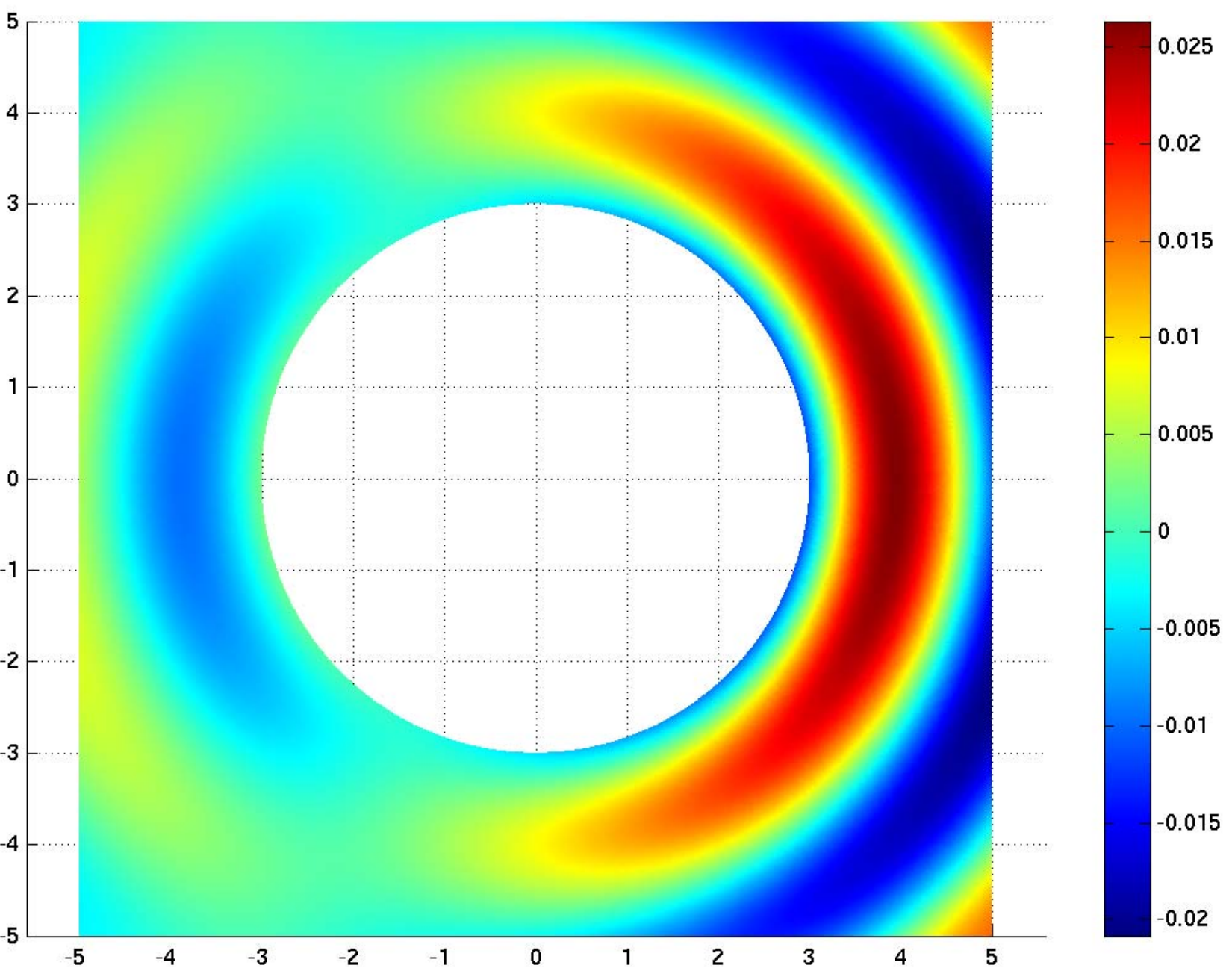}\hfill{}
\includegraphics[width=0.32\textwidth]{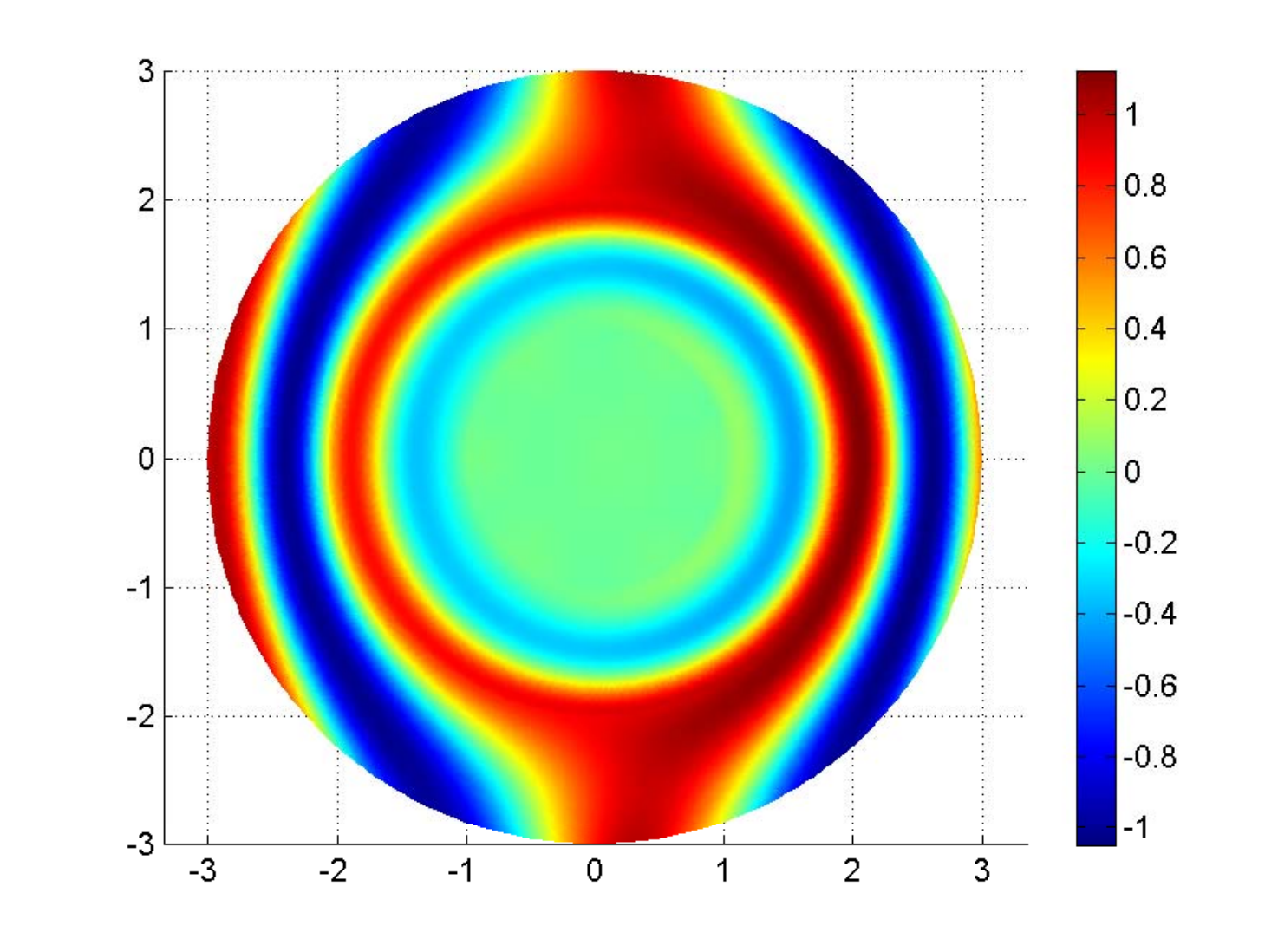}\hfill{}

\hfill{}
\includegraphics[width=0.32\textwidth]{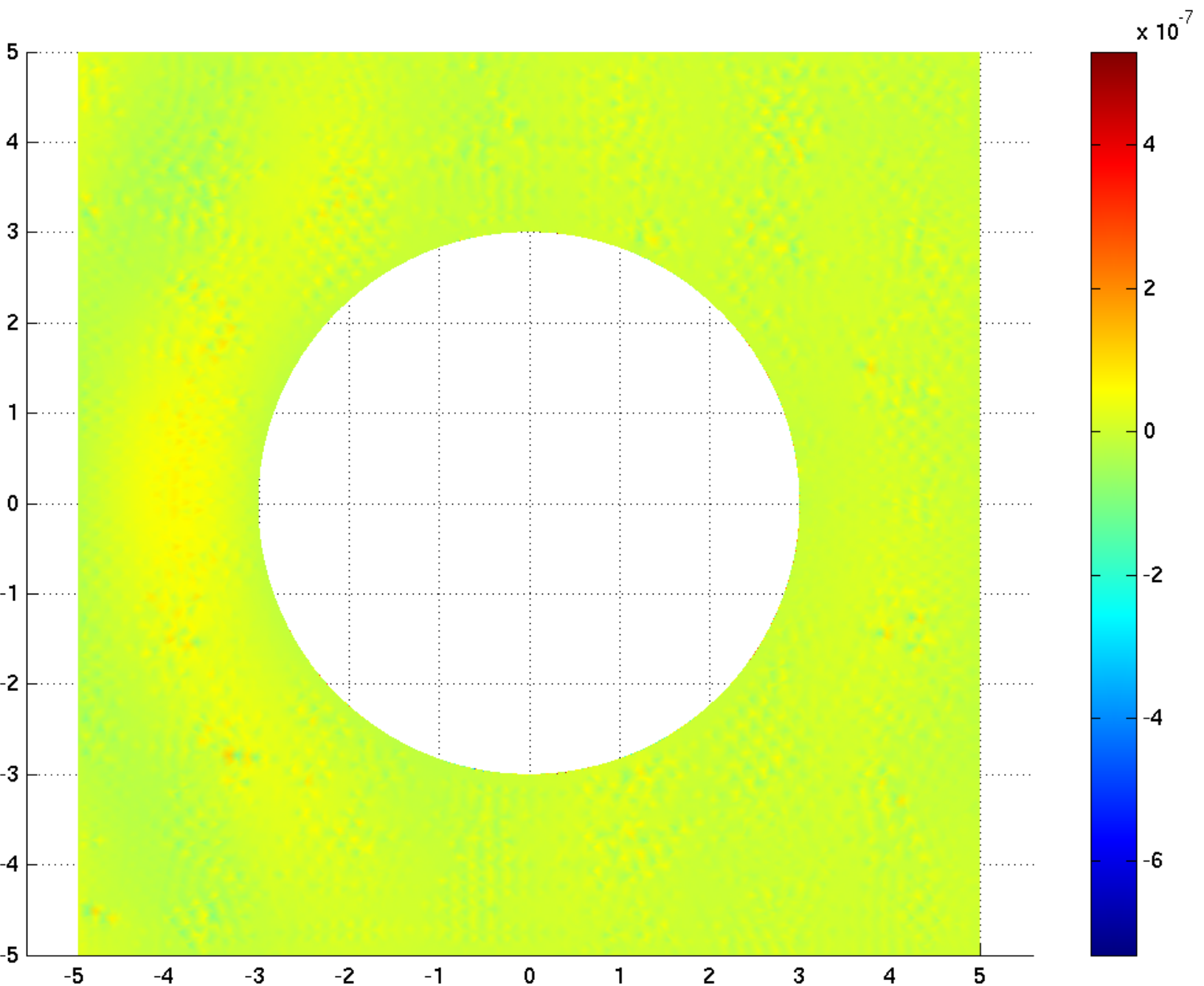}\hfill{}
\includegraphics[width=0.32\textwidth]{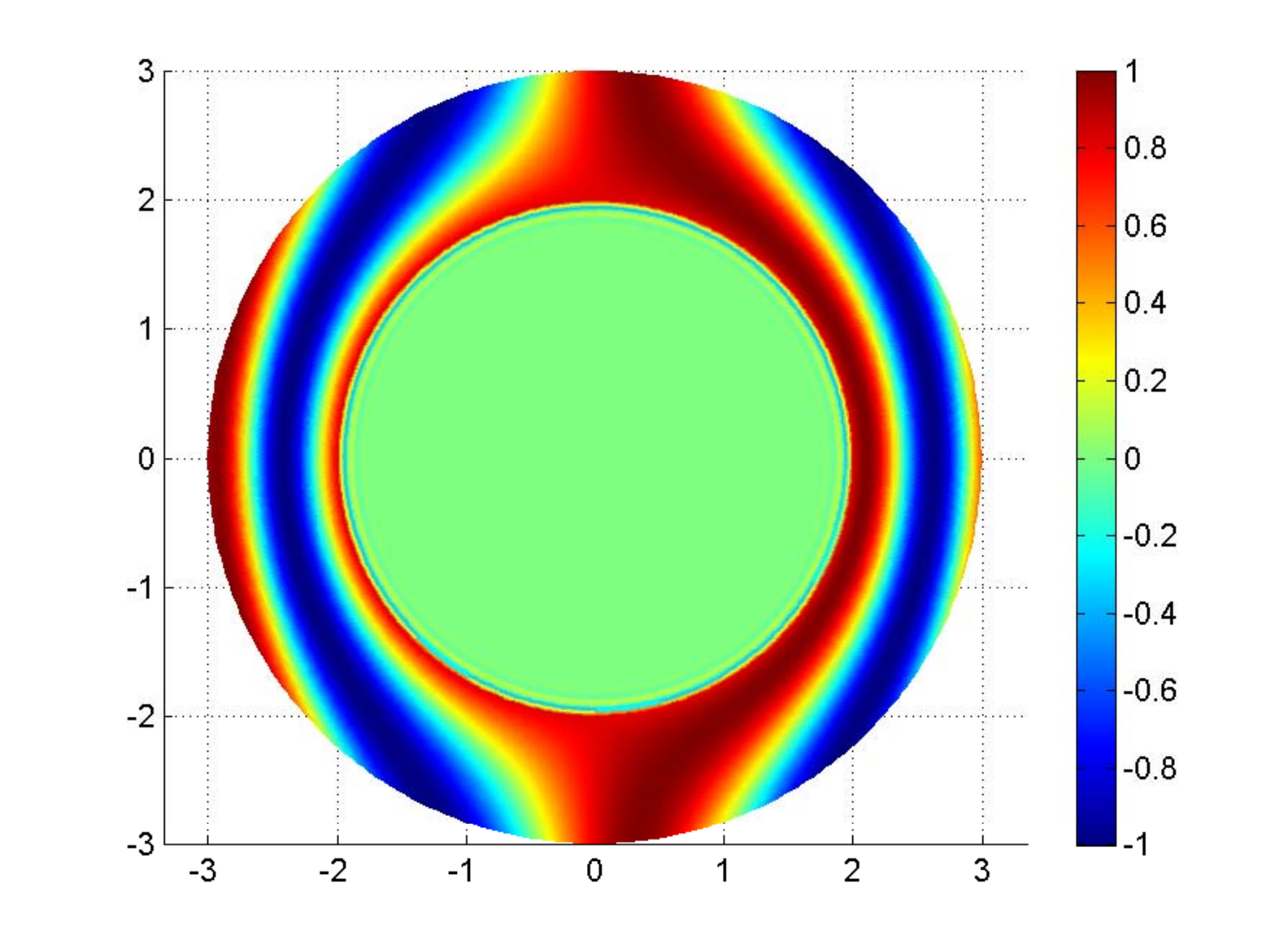}\hfill{}

\caption{\label{fig:S:us:scheme:4} The scattered wave $u^s$ (real
part) and the transmitted wave $u^t$  with respect to $\rho=10^{-1}$
and $10^{-5}$ from top to bottom, respectively, for \emph{Scheme
FSH}.}
\end{figure}

%In the discrete sense, all the norms are equivalent. So the discrete
%maximum norm  are used to measure the decay rate  of the far field
%data $u^\infty$, namely the maximum of modulus of $u^\infty$ are
%taken over 100 equidistant observation direction. We show the
%convergence of maximum norm error with respect to the quantity of
%$1/|\ln (\rho)|$.

Finally, we study the convergence history of the discrete maximum
norm of $\mathcal{A}(\hat{x})$ with respect to the regularization
parameter $\rho$. From
Figure~\ref{fig:linear:regression:scattering:sh:fsh}, we see clearly
second order decay rate of the discrete maximum norm of
$\mathcal{A}(\hat{x})$ in terms  of $\rho$ for the near-cloak
construction Schemes \emph{SH} and \emph{FSH}, compared with the red
reference line of second order decay in
Figure~\ref{fig:linear:regression:scattering:sh:fsh}, which confirms
the sharpness of our theoretical upper bounds for Schemes \emph{SH}
and \emph{FSH}. Similar to their sound-soft counterparts, it can be
seen from Figure~\ref{fig:linear:regression:scattering:sh:fsh} that
the discrete maximum norms for Schemes \emph{SH} and \emph{FSH} have
nearly the same values as $\rho$ decreases because the \emph{FSH}
lossy layer tends effectively to the sound-hard boundary condition
for small $\rho$ and thus Scheme \emph{FSH} approaches Scheme
\emph{SH} in the limit sense as $\rho\to 0$. The significantly
improved cloaking performance for Schemes \emph{SH} and \emph{FSH}
makes it easier for engineers to design practical near-cloak devices
in a variety of industrial applications.

\begin{figure}


%\hfill{}
%\includegraphics[clip,width=0.45\textwidth]{./fig/Scheme1_Boundary_measurement_convergence_test}\hfill{}
%\includegraphics[width=0.45\textwidth]{./fig/Scheme2_Scattering_measurement_convergence_test}\hfill{}
%
%\hfill{}
%\includegraphics[clip,width=0.4\textwidth]{./fig/Scheme1_regression_scattering}\hfill{}
%\includegraphics[clip,width=0.4\textwidth]{./fig/Scheme2_regression_scattering}\hfill{}
%

\hfill{}
\includegraphics[width=0.4\textwidth]{./fig/Scheme3_regression_scattering}\hfill{}
\includegraphics[width=0.4\textwidth]{./fig/Scheme4_regression_scattering}\hfill{}

% up to naked eye's discernment.

\caption{\label{fig:linear:regression:scattering:sh:fsh} Convergence
history of scattering measurement data versus $\rho$ for Scheme
\emph{ SH} (Left) and Scheme \emph{ FSH} (Right). }
\end{figure}

\end{document}